\tikzstyle{vertex}=[circle, draw, inner sep=2pt, minimum size=6pt]
\providecommand{\keywords}[1]{
  \small	
  \textbf{\textit{Keywords---}} #1
}
\def\T{^\top}
\newtheorem{theorem}{Theorem}[section]
\newtheorem{lemma}{Lemma}[section]
\newtheorem{corollary}{Corollary}[section]
\newtheorem{proposition}{Proposition}[section]
\newtheorem{example}{Example}[section]
\newtheorem{observation}{Observation}[section]
\newcommand{\nc}{\newcommand}
\newcommand{\Yildirim}{Y{\i}ld{\i}r{\i}m}
\nc{\cA}{{\cal A}}
\nc{\cB}{{\cal B}}
\nc{\cC}{{\cal C}}
\nc{\cD}{{\cal D}}
\nc{\cE}{{\cal E}}
\nc{\cG}{{\cal G}}
\nc{\cF}{{\cal F}}
\nc{\cH}{{\cal H}}
\nc{\cI}{{\cal I}}
\nc{\cK}{{\cal K}}
\nc{\cL}{{\cal L}}
\nc{\cM}{{\cal M}}
\nc{\cN}{{\cal N}}
\nc{\cO}{{\cal O}}
\nc{\cP}{{\cal P}}
\nc{\cQ}{{\cal Q}}
\nc{\cR}{{\cal R}}
\nc{\cS}{{\cal S}}
\nc{\cT}{{\cal T}}
\nc{\cV}{{\cal V}}
\nc{\tx}{{\tilde x}}
\nc{\la}{{\langle}}
\nc{\ra}{{\rangle}}
\nc{\ts}{\textsuperscript}
\def\R{\mathbb{R}}
\date{\today}
\title{On Tractable Convex Relaxations of Standard Quadratic Optimization Problems under Sparsity Constraints}
\author{Immanuel Bomze\thanks{VCOR and Research Network Data Science, University of Vienna, Oskar-Morgenstern-Platz 1,
1090 Wien, Austria. ORCID ID: 0000-0002-6288-9226 E-mail: {\tt immanuel.bomze@univie.ac.at}} \and Bo Peng\thanks{VGSCO and ISOR, University of Vienna, Oskar-Morgenstern-Platz 1,
1090 Wien, Austria. ORCID ID: 0000-0002-2650-0295 E-mail: {\tt bo.peng@univie.ac.at}} \and Yuzhou Qiu\thanks{School of Mathematics, Peter Guthrie Tait Road, The University of Edinburgh, Edinburgh, EH9 3FD, United Kingdom. E-mail: \tt{y.qiu-16@sms.ed.ac.uk}} \and E. Alper Y{\i}ld{\i}r{\i}m\thanks{School of Mathematics, Peter Guthrie Tait Road, The University of Edinburgh, Edinburgh, EH9 3FD, United Kingdom. ORCID ID: 0000-0003-4141-3189 E-mail: \tt{E.A.Yildirim@ed.ac.uk}}}
\date{September 28, 2023}
\begin{document}

\maketitle

\begin{abstract}
Standard quadratic optimization problems (StQPs) provide a versatile modelling tool in various applications. In this paper, we consider StQPs with a hard sparsity constraint, referred to as sparse StQPs. We focus on various tractable convex relaxations of sparse StQPs arising from a mixed-binary quadratic formulation, namely, the linear optimization relaxation given by the reformulation-linearization technique, the Shor relaxation, and the relaxation resulting from their combination. We establish several structural properties of these relaxations in relation to the corresponding relaxations of StQPs without any sparsity constraints, and pay particular attention to the rank-one feasible solutions retained by these relaxations. We then utilize these relations to establish several results about the quality of the lower bounds arising from different relaxations. We also present several conditions that ensure the exactness of each relaxation.
\end{abstract}

\keywords{Standard quadratic optimization problems, sparsity, mixed-integer quadratic optimization, reformulation-linearization technique, Shor relaxation}

{\bf AMS Subject Classification:} 90C11, 90C20, 90C22

\section{Introduction}

The Standard Quadratic optimization Problem (StQP) consists of minimizing a quadratic form over the standard simplex (all vectors with no negative coordinates that sum up to one).

Since no assumptions on the definiteness of quadratic form are made, this problem class is NP-hard. Indeed, the maximum-clique problem can be reduced to~\eqref{stqp}~\cite{motzkin_straus_1965}. Therefore, we can view the class of StQP as the simplest of the hard problems: the simplest non-convex objective functions are generated by indefinite Hessians, and  the feasible set is the simplest bounded polyhedron (polytope) with a very obvious structure of faces comprised of standard simplices in lower dimensions when some variables are fixed to zero.

Despite its simplicity, the class of StQPs provides a quite versatile modelling tool (see, e.g.,~\cite{Bomze98}). Applications are numerous, ranging from the famous Markowitz portfolio problem in finance, evolutionary game theory in economics and quadratic resource allocation problems, through machine learning (background–foreground clustering in image analysis), to the life sciences --— e.g., in population genetics (selection models) and ecology (replicator dynamics). 

StQPs appear also quite naturally as subproblems in copositive-conic relaxations of mixed-integer or combinatorial optimization problems of all sorts. Finally, using barycentric coordinates, every quadratic optimization problem over a polytope with known (and not too many) vertices can be rephrased as an StQP.

The aforementioned structural simplicity does not preclude coexistence of an exponential number of (local or global) solutions to some StQPs. Some of these solutions may be sparse (and will be so with high probability in the average case, see below), others may have many positive coordinates. However, in important applications like some variants of sparse portfolio optimization problems where one is interested in investments with a limited number of assets (see, e.g.,~\cite{MDA2019} and the references therein), sparsity of a solution must be enforced by an additional, explicit hard constraint on the number of positive coordinates. Introducing this sparsity constraint can render StQPs NP-hard even if the Hessian is positive-definite. 

This paper deals with such problems and investigates the structural properties of tractable linear and semidefinite relaxations which scale well with the dimension.

\section{Background, Motivation, and Layout of Contribution}

In this section, we provide some background on standard quadratic optimization problems. We present our motivation for studying the variant with a hard sparsity constraint. We introduce our notation and give an outline of the paper.

\subsection{The Combinatorial Nature of Standard Quadratic Optimization -- Coexistence of Solutions and Role of Active Sets}

The well-studied Standard Quadratic optimization Problem (StQP) is given by
\begin{equation}\tag{StQP}\label{stqp}
\ell(Q) := \min\limits_{x \in \R^n} \left\{x\T Q x: x \in F\right\},
\end{equation}
where $Q \in \cS^n$ is the problem data, $x \in \R^n$ is the decision variable, and $F \subset \R^n$ denotes the standard simplex given by
\begin{equation} \label{def_F}
F := \left\{x \in \R^n: e\T  x = 1, \quad x \geq 0\right\},
\end{equation}
where $e \in \R^n$ denotes the vector of all ones. 

There is an exponential number, namely $2^n-1$, of faces of $F$, which form the ``combinatorial'' reason for NP-hardness. 
Indeed, if the active set $\{ i : x_i^* =0 \}$ at the global solution $x^*$ is known exactly, locating the solution (i.e., determining $x^*$ or a value-equivalent alternative with the same set of zero coordinates) reduces to solving an $n\times n$ linear equation system. The same holds true for locating local solutions and even first-order critical (KKT) points. 
This phenomenon may be the reason why recently iterative first-order methods were proposed, which can achieve identification of the correct active set in finite time~\cite{Bomz19a}. 

For any instance of \eqref{stqp}, not all faces of $F$ can contain an isolated (local or global) solution in their relative interior, as there is an upper bound on their cardinality given by Sperner's theorem on the maximal antichain (and Stirling's asymptotics), namely
\begin{equation}\label{sperstir}    
 \binom{n}{\lfloor \frac n2\rfloor} \sim \sqrt{\frac 2{\pi n}}\, 2^n\quad\mbox{as } n\to \infty\, .\end{equation}
Scozzari and Tardella~\cite{Scoz08} show that solutions can occur only in the relative interior of a face restricted to which the objective function is strictly convex. Nevertheless,
recent research~\cite{Bomz16a} has shown an exponential behavior regarding the number of local (or global) solutions: in the worst case, an instance of \eqref{stqp} of order $n$ can have at least
\begin{equation}\label{lowbd}
 (15120)^{n/4} \approx (1.4933)^n   
\end{equation}
coexisting optimal solutions, a lower bound that currently seems to be the largest one known. The other bad news is that rounding on the standard simplex is, from the asymptotic point of view, also not always successful~\cite{Bomz14c}. In spite of all this, \eqref{stqp} admits a polynomial-time approximation scheme (PTAS)~\cite{Bomz02c}.

\subsection{Worst-case versus Average-Case Behavior -- Expected Sparsity}

All of the above observations refer to the worst case, of course. Several researchers turned to the average case, modelled by randomly chosen instances. Already in 1988, Kingman~\cite{King61a} observed that very large polymorphisms (i.e., solutions $x^*$ with more than $C\sqrt n$ positive coordinates) are atypical. More recently, in a series of papers Kontogiannis and Spirakis~\cite{Kont05,Kont09,Kont10} looked at models with several independent and identically distributed (e.g., Gaussian or uniform) entries of $Q \in \cS^n$
and proved, among other results, that the expected number of (local) solutions does not grow faster than $\exp(0.138n)\approx (1.148)^n$, way smaller than the worst-case lower bound in~\eqref{lowbd}. Based upon more recent research by Chen and coauthors~\cite{chen2015new,chen2013sparse}, under quite reasonable distributional assumptions modeling the random average case, the probability that the global solution has more than 2 positive coordinates (i.e., that it does not lie on an edge of $F$) is asymptotically vanishing faster than
$$K \frac {(\log n)^2}n\quad\mbox{with } n\to \infty\, ,$$
where $K>0$ is a universal constant~\cite[Proposition~1]{Bomz16a}.

\subsection{StQPs with a Hard Sparsity Constraint}

However, if the instances are somehow structured, we cannot rely on our ``luck'' that $Q$ exhibits an average behavior in the above sense, and still, we may prefer a sparse solution to~(\ref{stqp}). So, in pursuit of these sparse solutions, we introduce the following variant under a cardinality constraint, referred to as the \emph{sparse StQP}: 
\begin{equation*}
 \ell_\rho (Q) := \min\limits_{x \in \R^n} \left\{x\T  Q x: x \in F_\rho\right\}, 
\end{equation*}
where 
\begin{equation} \label{def_F_rho}
F_\rho := \left\{x \in F : \quad \|x\|_0 \leq \rho\right\}\, .
\end{equation}
Here, $\|x\|_0$ denotes the number of nonzero components of a vector $x$ and $\rho \in \{1,\ldots,n\}$ is the sparsity parameter. 

The elements of $F_\rho$ will be referred to as $\rho$-sparse.  When $\rho$ is fixed independently of $n$, $F_\rho$ is the union of  ${\mathfrak O} (n^\rho)$ faces of $F$, a number  polynomial in $n$. In each of these faces, due to~\eqref{sperstir}, at most
$\binom{\rho}{\lfloor \frac \rho 2\rfloor}$ local solutions to~\eqref{def_F_rho} can coexist, so we end up with a polynomial set of candidates which makes problem~\eqref{def_F_rho} solvable in polynomial time, again for universally fixed $\rho$. However, if $\rho$ may increase with $n$, e.g. $\rho=  \lfloor \frac n2\rfloor  $, or even $\rho =n$, the above observations show that the sparse StQP is NP-hard even when $Q$ is positive semidefinite due to the combinatorial nature of the sparsity term $\|x\|_0$.

Evidently, any (feasible or optimal) solution of the sparse StQP is a feasible solution to~\eqref{stqp} with guaranteed $\rho$-sparsity, which can be crucial. Even if $\rho $ is fixed to a moderate number, say to 6, and for medium-scale dimensions, say $n=100$, polynomial worst-case behavior would not help much in practical optimization since $n^\rho=10^{12}$. This emphasizes the need for tractable relaxations of the sparse StQP. 

We start with some simple observations.

\begin{lemma} \label{simple-obs1}
The following relations hold:
\begin{equation} \label{rels1}
 \ell(Q) = \ell_n(Q) \leq \ell_{n-1}(Q) \leq \ldots \leq \ell_2(Q)\leq\ell_1(Q) \, ,
\end{equation}
with \begin{equation} \label{rho1}
\ell_1(Q) = \min\limits_{1\le k \le n} Q_{kk} \, ,\end{equation} 
and
\begin{equation} \label{rho2}
\ell_2(Q) = \min\left\{\min \left\{ \textstyle{\frac{Q_{ii}Q_{jj}-Q_{ij}^2}{Q_{ii}+Q_{jj}-2Q_{ij}}}: Q_{ij} < \min\{Q_{ii},Q_{jj}\}, {1\le i<j\le n} \right \}, \, \ell_1(Q)\right\}\, .\end{equation}
Furthermore, we have $\ell(Q) = \ell_\rho(Q)$ if and only if (\ref{stqp}) has a $\rho$-sparse optimal solution.
\end{lemma}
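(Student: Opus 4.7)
The plan is to dispatch the four assertions in the stated order, since each builds on the previous.

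\textbf{Chain of inequalities.} The identity $F = F_n$ is immediate from the definition of $F_\rho$, and if $\rho \le \rho'$ then $F_\rho \subseteq F_{\rho'}$. Since all $F_\rho$ share the same objective, the chain in~\eqref{rels1} follows by monotonicity of the infimum with respect to enlargement of the feasible set.

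\textbf{Formula for $\ell_1(Q)$.} The set $F_1$ consists of the points of the simplex with at most one nonzero coordinate; because the entries must sum to $1$, these are exactly the unit vectors $e_1,\ldots,e_n$. Evaluating $x\T Q x$ at $x=e_k$ gives $Q_{kk}$, which yields~\eqref{rho1} after minimizing over $k$.

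\textbf{Formula for $\ell_2(Q)$.} Any $x \in F_2$ is either in $F_1$ (contributing $\ell_1(Q)$) or has exactly two nonzero coordinates, say at indices $i<j$. In the latter case write $x = t\,e_i + (1-t)\,e_j$ with $t\in(0,1)$, so that
\[
\varphi_{ij}(t) := x\T Q x = t^2 Q_{ii} + 2t(1-t)Q_{ij} + (1-t)^2 Q_{jj}.
\]
A short calculation gives $\varphi_{ij}'(t) = 2\bigl[(Q_{ii}+Q_{jj}-2Q_{ij})\,t + (Q_{ij}-Q_{jj})\bigr]$, and the unique critical point $t^{*} = (Q_{jj}-Q_{ij})/(Q_{ii}+Q_{jj}-2Q_{ij})$ lies strictly inside $(0,1)$ exactly when $Q_{ij} < \min\{Q_{ii},Q_{jj}\}$ (this condition also forces the leading coefficient $Q_{ii}+Q_{jj}-2Q_{ij}$ to be positive, so $\varphi_{ij}$ is strictly convex on the edge and $t^{*}$ is a minimizer). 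Substituting $t^{*}$ into $\varphi_{ij}$ and simplifying the numerator via the identity
\[
Q_{ii}(Q_{jj}-Q_{ij})^{2} + 2Q_{ij}(Q_{jj}-Q_{ij})(Q_{ii}-Q_{ij}) + Q_{jj}(Q_{ii}-Q_{ij})^{2} = (Q_{ii}Q_{jj}-Q_{ij}^{2})(Q_{ii}+Q_{jj}-2Q_{ij})
\]
yields $\varphi_{ij}(t^{*}) = (Q_{ii}Q_{jj}-Q_{ij}^{2})/(Q_{ii}+Q_{jj}-2Q_{ij})$. When $Q_{ij} \geq \min\{Q_{ii},Q_{jj}\}$, the minimum of $\varphi_{ij}$ on $[0,1]$ is attained at an endpoint and equals $\min\{Q_{ii},Q_{jj}\}$, already absorbed into $\ell_1(Q)$. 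Minimizing over all pairs $(i,j)$ and combining with $\ell_1(Q)$ produces~\eqref{rho2}. I expect this algebraic simplification to be the one step requiring the most care, but it is routine.

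\textbf{Exactness criterion.} If $\ell(Q) = \ell_\rho(Q)$, then any minimizer of $\ell_\rho(Q)$ is by definition $\rho$-sparse and attains the value $\ell(Q)$, so~\eqref{stqp} has a $\rho$-sparse optimal solution. Conversely, if $x^{*}$ is a $\rho$-sparse optimal solution to~\eqref{stqp}, then $x^{*} \in F_\rho$ gives $\ell_\rho(Q) \leq (x^{*})\T Q x^{*} = \ell(Q)$, while~\eqref{rels1} yields the reverse inequality, establishing equality.
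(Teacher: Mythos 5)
Your proof is correct and follows the same route as the paper's: the chain~\eqref{rels1} and formula~\eqref{rho1} from the nesting $F_1\subset F_2\subset\cdots\subset F_n=F$, formula~\eqref{rho2} from analyzing the univariate quadratic on each edge $\mathrm{conv}\{e^i,e^j\}$, and the last assertion directly from the definitions. You simply carry out explicitly the edge computation (critical point, convexity condition, and the algebraic identity for $\varphi_{ij}(t^*)$) that the paper dismisses as a ``straightforward discussion,'' and your identity checks out.
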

\begin{proof}
The relations~\eqref{rels1} and \eqref{rho1} follow from $F_1 = \{e^1,e^2,\ldots,e^n\} \subset F_2 \subset \cdots \subset F_{n-1} \subset F_n = F$, where $F_\rho$ and $F$ are given by \eqref{def_F} and \eqref{def_F_rho}, respectively. For $\rho=2$, a straightforward discussion of univariate quadratics over the edges $\mathrm{conv} \left(\left\{e^i,e^j\right\}\right),~1 \leq i < j \leq n$ (in case these are strictly convex functions yielding a minimizer in the relative interior of the edge) is sufficient to establish~\eqref{rho2}. The last assertion is trivial.
\end{proof}

The condition $\ell(Q)=\ell_2(Q)$ is related to edge-convexity of the instance of \eqref{stqp} as discussed in~\cite[Theorem~1]{Scoz08} but we will not dive into details here. Rather observe that the effort to calculate $\ell_2(Q)$, obviously an upper bound of $\ell(Q)$, is the same as for the closed-form lower bound $\ell^{\rm ref}(Q)\leq \ell(Q)$ proposed in~\cite{Bomz08b}. The bracket 
$$\ell^{\rm ref}(Q)\leq \ell(Q) \leq \ell_2(Q)$$
shrinks to a singleton (i.e. the discussed bounds are exact) if and only if all off-diagonal entries of $Q$ are equal, in which case, an optimal solution $x^*$ to~\eqref{stqp} must satisfy $\|x^*\|_0 \leq 2$ (see~\cite[Theorem~2]{Bomz08b} and~\eqref{rho2}).

\subsection{Mixed-Binary Quadratic Formulation of Sparse StQPs and Contributions}

By introducing binary variables, the sparse StQP can be reformulated as a mixed-binary QP:
\[
\begin{array}{llrcl}
\tag{StQP($\rho$)}\label{sstqp} & \ell_\rho (Q) =\min\limits_{x \in \R^n} & x\T  Q x & & \\
 & \textrm{s.t.} & & & \\
 & & e\T  x & = & 1 \\ 
 & & e\T  u & = & \rho \\ 
 & & x & \leq & u \\
 & & u & \in & \{0,1\}^n  \\
 & & x & \geq & 0.
    \end{array}
\]

In this paper, we focus on various convex relaxations of \eqref{sstqp}, all more tractable than the conic ones presented in~\cite[Section~3.2]{Bomz23a} for general quadratic optimization problems. In particular, we establish several structural properties of these relaxations and shed light on the relations between each relaxation of \eqref{sstqp} and the corresponding relaxation of \eqref{stqp}. We then draw several conclusions about the relations between different relaxations as well as the strength of each relaxation.

While it turns out that all relaxations behave as expected for the case of $\rho = n$, already for the cases $\rho=1$ and $\rho=2$ (which cannot be excluded with a high probability in the random average case models) and other moderate sparsity values, there is a sharp contrast between the relaxations, which contributes to the motivation of this study. Typically, applications would require models with sparsity (significantly) less than half of the dimension, for which we obtain more interesting results. 

We will also pay particular attention to the case of rank-one solutions to the relaxations (all of them use matrix variables by lifting), in particular, because they certify optimality if optimal to the relaxed problems, and also because in algorithmic frameworks, we may (warm-)start with some (good) feasible solutions to the original problem of larger sparsity than desired.

\subsection{Notation and Organisation of the Paper}

We use $\R^n$, $\R^n_+$, $\R^{m \times n}$, and $\cS^n$ to denote the $n$-dimensional Euclidean space, the nonnegative orthant, the set of $m \times n$ real matrices, and the space of $n \times n$ real symmetric matrices, respectively.  We use 0 to denote the real number 0, the vector of all zeroes, as well as the matrix of all zeroes, which should always be clear from the context. We denote by $e \in \R^n$ and $e^i \in \R^n,~1 \leq i \leq n$, the vector of all ones and the $i$th unit vector, respectively. All inequalities on vectors or matrices are understood to be applied componentwise. For $A \in \cS^n$ and $B \in \cS^n$, we use $A \succeq B$ to denote that $A - B$ is positive semidefinite. 
For $x \in \R^n$ and an index set $\mathbf{K} \subseteq \{1,\ldots,n\}$, we denote by $x_{\mathbf{K}} \in \mathbb{R}^{\lvert\mathbf{K}\rvert}$ the subvector of $x$ restricted to the indices in $\mathbf{K}$, where $\lvert\cdot\rvert$ denotes the cardinality of a finite set. 
For singleton index sets, we simply use $x_j$ and $A_{ij}$ to denote the components of $x \in \R^n$ and $A \in \R^{m \times n}$. For $B \in \R^{n \times n}$ and $b \in \R^n$, we denote by $\textrm{diag}(B) \in \R^n$ and $\textrm{Diag}(b) \in \cS^n$ the vector given by the diagonal entries of $B$ and the diagonal matrix whose diagonal entries are given by $b$, respectively. The convex hull of a set is denoted by $\textrm{conv}(\cdot)$. For any $u \in \R^n$ and $v \in \R^n$, $u\T v$ denotes the Euclidean inner product. Similarly, for any $U \in \R^{m \times n}$ and $V \in \R^{m \times n}$, the trace inner product is denoted by 
\[
\langle U, V \rangle = \textrm{trace}(U^T V) = \sum\limits_{i=1}^m \sum\limits_{j = 1}^n U_{ij} V_{ij}.
\]

The paper is organized as follows. In Section~\ref{Sec2}, we consider several convex relaxations of (\ref{sstqp}). Section~\ref{RLT-Relaxation} focuses on the RLT (reformulation-linearization technique) relaxation of \eqref{sstqp} and presents several results in comparison with the RLT relaxation of (\ref{stqp}). The Shor relaxation of \eqref{sstqp} is treated in Section~\ref{SDP-Relaxation} and compared with that of \eqref{stqp}. In Section~\ref{SDP-RLT-Relaxation}, we then study the convex relaxation of \eqref{sstqp} given by combining the RLT and Shor relaxations and compare it with that of \eqref{stqp}. We conclude the paper in Section~\ref{conc}.

\section{Convex Relaxations: RLT and Shor} \label{Sec2}

In this section, we consider several well-known convex relaxations of (\ref{sstqp}), which use LP (linear programming) and SDP (semidefinite programming) methods. We study their properties and establish relations between each relaxation of (\ref{sstqp}) and the corresponding relaxation of \eqref{stqp}.

\subsection{RLT Relaxation} \label{RLT-Relaxation}

In this section, we consider the RLT (reformulation-linearization technique) relaxation of \eqref{sstqp} and compare it with the RLT relaxation of (\ref{stqp}). 

RLT relaxations of optimization problems with a quadratic objective function and a mix of linear and quadratic constraints are obtained by a two-stage process (see, e.g., \cite{Sherali1999}). The first stage, referred to as reformulation, consists of generating (additional) valid quadratic constraints from linear constraints by multiplying each pair of linear inequality constraints as well as each linear equality constraint by each variable. In the second stage, referred to as linearization, all of the original and additional quadratic functions are linearized by replacing the quadratic terms $x_i x_j$ by a lifted variable $X_{ij},~1 \leq i \leq j \leq n$. Together with the original linear constraints, this gives rise to the RLT relaxation. 

We first start with the RLT relaxation of (\ref{stqp}):
\[
\tag{R1}\label{R1} \quad \ell^{R1}(Q) := \min\limits_{x \in \R^n, X \in \cS^n} \left\{\langle Q, X \rangle: (x,X) \in \cF^{R1}\right\},
\]
where 
\begin{equation} \label{def_F_R1}
\cF^{R1} := \left\{(x,X) \in \R^n \times \cS^n: e\T  x = 1, \quad X e = x, \quad x \geq 0, \quad X \geq 0\right\}.   
\end{equation}

Note that $x \geq 0$ is a redundant constraint in $\cF^{R1}$ since it is implied by $X e = x$ and $X \geq 0$. Furthermore, it is easy to see that $\cF^{R1}$ is a polytope. We first recall the following result about \eqref{R1}.

\begin{proposition}[Qiu and Y{\i}ld{\i}r{\i}m~(2023)~\cite{QiuY23a}] 
\label{rlt-closed-form}
The set of vertices of $\cF^{R1}$ is given by 
\begin{equation} \label{rlt-vertices}
\left\{(e^i, e^i (e^i)\T ): i = 1,\ldots,n\right\} \cup \left\{\left(\frac{1}{2}(e^i+e^j), \frac{1}{2}(e^i (e^j)\T  + e^j (e^i)\T )\right): 1 \leq i < j \leq n\right\}.
\end{equation}
Therefore,
\[
\ell^{R1} (Q) = \min\limits_{1\leq i \leq j \leq n} Q_{ij} \leq \ell(Q).
\]
Furthermore, \eqref{R1} is exact (i.e., $\ell^{R1} (Q) = \ell(Q)$) if and only if 
\[
\min\limits_{1\leq i \leq j \leq n} Q_{ij} = \min\limits_{1\le k \le n} Q_{kk}.
\]
\end{proposition}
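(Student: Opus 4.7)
To prove the three assertions, I would proceed in order.

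For the vertex description, I would first note that the constraint $Xe = x$ expresses $x$ as a function of $X$, so $\cF^{R1}$ is in bijection with the set of symmetric $X \ge 0$ whose entries sum to one. Parametrising $X$ by its independent entries $\{X_{ii}\}_{i=1}^n \cup \{X_{ij}\}_{1\le i<j\le n}$, the single remaining equation reads $\sum_i X_{ii} + 2\sum_{i<j} X_{ij} = 1$, so $\cF^{R1}$ is affinely isomorphic to a standard simplex in $\binom{n+1}{2}$ variables. Its vertices correspond to placing all the mass on a single coordinate: placing mass $1$ on a diagonal slot $X_{ii}$ yields $X = e^i (e^i)\T$ and hence $x = e^i$; placing mass $\tfrac12$ on an off-diagonal slot $X_{ij}$ (to absorb the factor $2$) yields $X = \tfrac12(e^i (e^j)\T + e^j (e^i)\T)$ and $x = \tfrac12(e^i+e^j)$. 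These are exactly the vertices listed in~\eqref{rlt-vertices}.

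For the closed-form bound, since $\langle Q, X\rangle$ is linear and $\cF^{R1}$ is a polytope, its minimum is attained at a vertex. Evaluating gives $Q_{ii}$ at the diagonal vertices and (using symmetry of $Q$) $Q_{ij}$ at the off-diagonal vertices, so $\ell^{R1}(Q) = \min_{1\le i\le j\le n} Q_{ij}$. The inequality $\ell^{R1}(Q) \le \ell(Q)$ is immediate from the observation that $(x, xx\T) \in \cF^{R1}$ for every $x \in F$, with objective value $x\T Q x$.

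The main obstacle is the ``only if'' part of the exactness criterion, which I plan to handle by contrapositive. The ``if'' direction is a squeeze: assuming $q^* := \min_{i\le j} Q_{ij} = \min_k Q_{kk}$, Lemma~\ref{simple-obs1} yields $\ell^{R1}(Q) = q^* = \ell_1(Q) \ge \ell(Q) \ge \ell^{R1}(Q)$, forcing equality. Conversely, suppose $q^* < \min_k Q_{kk}$. Then for every $x \in F$,
\[
x\T Q x \;=\; \sum_k x_k^2 Q_{kk} + 2\sum_{k<l} x_k x_l Q_{kl} \;>\; q^*\sum_k x_k^2 + q^* \cdot 2\sum_{k<l} x_k x_l \;=\; q^*(e\T x)^2 \;=\; q^*,
\]
the strict inequality arising from $\sum_k x_k^2 > 0$ together with $Q_{kk} > q^*$ for every $k$, while each off-diagonal coefficient satisfies $Q_{kl} \ge q^*$. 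Hence $\ell(Q) > q^* = \ell^{R1}(Q)$, and exactness fails, completing the contrapositive.
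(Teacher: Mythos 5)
Your proposal is correct. Note that the paper does not prove this proposition at all --- it is imported verbatim from Qiu and Y{\i}ld{\i}r{\i}m~\cite{QiuY23a} --- so there is no in-paper argument to compare against; what you have written is a complete, self-contained proof. The key observation that $Xe=x$ makes the projection onto $X$ an affine isomorphism between $\cF^{R1}$ and $\left\{X\in\cS^n: X\geq 0,\ e\T X e=1\right\}$, which is a simplex in the $\binom{n+1}{2}$ upper-triangular coordinates with weights $1$ on diagonal slots and $2$ on off-diagonal slots, immediately yields the vertex list (and also explains the factor $\tfrac12$). The closed form for $\ell^{R1}(Q)$ then follows by evaluating the linear objective at the vertices, and the containment $(x,xx\T)\in\cF^{R1}$ for $x\in F$ gives the lower-bound property. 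The exactness criterion is also handled correctly: the ``if'' direction is the squeeze $\ell^{R1}(Q)=q^*=\ell_1(Q)\geq\ell(Q)\geq\ell^{R1}(Q)$, and the contrapositive for ``only if'' uses $Q_{kl}\geq q^*$ off the diagonal and $Q_{kk}>q^*$ on the diagonal together with $\sum_k x_k^2>0$ to get the strict inequality $x\T Qx>q^*$ pointwise on $F$; since $F$ is compact the minimum is attained, so $\ell(Q)>q^*=\ell^{R1}(Q)$. All steps check out.
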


Proposition~\ref{rlt-closed-form} implies that \eqref{R1} is exact if and only if the minimum entry of $Q$ is on the diagonal. In this case, \eqref{stqp} has a 1-sparse optimal solution, i.e., the optimal solution of \eqref{stqp} without any sparsity constraint is already the sparsest possible solution. Furthermore, by Lemma~\ref{simple-obs1}, we immediately obtain
\begin{equation} \label{min-diag}
\ell(Q) = \ell_n(Q) = \ell_{n-1}(Q) = \ldots = \ell_1(Q) = \min\limits_{1\le k \le n} Q_{kk}.  \end{equation}

By reformulating the binarity constraint $u_j  \in  \{0,1\}$ with $u_j^2 = u_j,~j = 1,\ldots,n$ in (\ref{sstqp}), we obtain the following RLT relaxation:

\[
\begin{array}{llrcl}
\tag{R1($\rho$)}\label{R1rho} & \ell^{R1}_\rho (Q) :=  \min\limits_{x \in \R^n, u \in \R^n, X\in \cS^n,U \in \cS^n, R \in \R^{n \times n}} & \langle Q,X \rangle & & \\
 & \textrm{s.t.} & & & \\
 & & e\T x &= &1\\
 & & e\T u &=& \rho \\
 & & x & \leq & u \\
 & & x & \geq & 0 \\ 
  & & \textrm{diag} (U) & = & u\\
   & & Xe & = & x \\
   & & R\T e & = & u \\
   & & Re & = & \rho \, x \\
    & & Ue & = & \rho \, u \\
    & & X - R\T  - R + U & \geq & 0 \\
    & & X - R\T  & \leq & 0 \\
    & & R - U & \leq & 0 \\
    & & X , R, U & \geq & 0. \\
\end{array}
\]

Before we continue, let us remark that the constraints $x \leq u$ and $x \geq 0$ are redundant in (R1($\rho$)) since they are implied by the constraints $Xe = x$, $X \geq 0$, $R\T  e = u$, and $X - R\T  \leq 0$. Likewise, they imply $u\geq 0$ and $R \geq 0$. Furthermore, it is easy to verify that $R - U \leq 0$ and $U \geq 0$ are implied by the constraints $X - R\T  \leq 0$ and $X - R\T  - R + U\geq 0$. Note that $u\le e$ is not implied in this formulation.

Let us denote the projection of the feasible region of \eqref{R1rho} onto $(x,X)$ by
\begin{equation} \label{def_F_R1_rho}
\cF^{R1}_\rho := \left\{(x,X) \in \R^n \times \cS^n: (x,u,X,U,R) \textrm{ is \eqref{R1rho}-feasible for some } (u,U,R) \in \R^n \times \cS^n \times \R^{n \times n}\right\}\,  .
\end{equation}
Note that 
\begin{equation} \label{ell_rho_R1_alt}
\ell^{R1}_\rho (Q) =  \min\limits_{(x,X) \in \R^n \times \cS^n}\left\{\langle Q, X \rangle: (x,X) \in \cF^{R1}_\rho\right\}.    
\end{equation}

Clearly, we have $\cF^{R1}_\rho \subseteq \cF^{R1}$ if $1\le\rho \le n$, where $\cF^{R1}$ is given by \eqref{def_F_R1}. Our next result gives a description of $\cF^{R1}_\rho$ in closed form for each $\rho \in \{1,\ldots,n\}$.

\begin{lemma} \label{F_R1_rho_desc}
\begin{enumerate}
    \item[(i)] $\cF^{R1}_1 = \left\{(x,X) \in \R^n \times \cS^n: e\T  x = 1, \quad X = \textrm{Diag}(x), \quad x \geq 0 \right\}$.
    \item[(ii)] For each $\rho \in \{2,3,\ldots,n\}$, we have $\cF^{R1}_\rho = \cF^{R1}$, where $\cF^{R1}$ is given by \eqref{def_F_R1}.
\end{enumerate}  
\end{lemma}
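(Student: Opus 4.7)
The plan is to prove each inclusion separately, using the fact that the inclusion $\cF^{R1}_\rho \subseteq \cF^{R1}$ holds trivially for all $\rho \in \{1, \ldots, n\}$, since the defining constraints of $\cF^{R1}$ form a subset of the defining constraints of \eqref{R1rho}.

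For part (i), we first establish the reverse inclusion $\cF^{R1}_1 \supseteq \{(x, \textrm{Diag}(x)) : x \in F\}$ by exhibiting the explicit witness $(u, U, R) := (x, \textrm{Diag}(x), \textrm{Diag}(x))$ and checking by inspection that each constraint of \eqref{R1rho} with $\rho=1$ is met (all three coupled RLT inequalities collapse to the zero matrix). Second, and more substantively, we show that no other $(x, X)$ can lie in $\cF^{R1}_1$: the equalities $\rho = e\T u = 1 = e\T x$ together with $0 \leq x \leq u$ and $u \geq 0$ force $x = u$ coordinatewise; then $\textrm{diag}(U) = u$ and $Ue = \rho u = u$ combined with $U \geq 0$ force $U = \textrm{Diag}(x)$; writing the coupled inequalities $X_{ij} \leq R_{ji}$ and $X_{ij} + U_{ij} \geq R_{ij} + R_{ji}$ at an off-diagonal entry and using $R \geq 0$ yields $R_{ij} = R_{ji} = X_{ij} = 0$, after which $Xe = x$ and $Re = x$ pin down the remaining diagonal entries to $x$.

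For part (ii), the plan is to exploit convexity. Because \eqref{R1rho} is defined by finitely many linear relations, its projection $\cF^{R1}_\rho$ onto $(x, X)$ is polyhedral, hence convex. It therefore suffices to exhibit, for each of the vertices of $\cF^{R1}$ described in Proposition~\ref{rlt-closed-form}, a lift to a feasible solution of \eqref{R1rho}. For the diagonal vertex $(e^i, e^i(e^i)\T)$ we pick any binary $u$ with $e\T u = \rho$ and $u_i = 1$ and lift to $U := uu\T$, $R := e^i u\T$; the coupled RLT inequality then collapses to $(u - e^i)(u - e^i)\T \geq 0$, while the other two inequalities factor analogously as $(e^i - u)(e^i)\T \leq 0$ and $(e^i - u) u\T \leq 0$. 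For the off-diagonal vertex $\bigl(\tfrac{1}{2}(e^i + e^j), \tfrac{1}{2}(e^i(e^j)\T + e^j(e^i)\T)\bigr)$, which requires $\rho \geq 2$ (explaining precisely why the case $\rho = 1$ in (i) behaves differently), we take any binary $u$ with $e\T u = \rho$ and $u_i = u_j = 1$, and lift to $U := uu\T$, $R := \tfrac{1}{2}(e^i + e^j) u\T$.

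The main obstacle we anticipate is the verification of the coupled inequality $X - R\T - R + U \geq 0$ for the off-diagonal vertex type in (ii): unlike the diagonal case, this expression does not admit an obvious rank-one outer-product factorization. We plan to handle it by a short case split on whether the row and column indices lie in $\{i, j\}$ or not, exploiting the binary structure of $u$ and the fact that $R$ is supported on rows $i$ and $j$. Once this step is dispatched, all remaining verifications (row- and column-sum equalities, componentwise nonnegativity of $X$, $R$, and $U$) are immediate sign consequences of the chosen witnesses.
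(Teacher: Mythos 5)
Your proposal is correct and follows essentially the same route as the paper: for (i), the constraints of \eqref{R1rho} with $\rho=1$ force $U$, $R$, and hence $X$ to be diagonal (the paper derives this from $R-U\leq 0$ and $X-R\T\leq 0$, while you combine $X-R\T\leq 0$ with $X-R\T-R+U\geq 0$, which works equally well), and the witness $(u,U,R)=(x,\textrm{Diag}(x),\textrm{Diag}(x))$ gives the reverse inclusion; for (ii), both arguments reduce to lifting the vertices of the polytope $\cF^{R1}$ from Proposition~\ref{rlt-closed-form} via $R=xu\T$, $U=uu\T$ with a suitable binary $u$. The only difference is that you spell out the verification of $X-R\T-R+U\geq 0$ at the off-diagonal vertices, which the paper leaves as ``easy to verify.''
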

\begin{proof}
   (i) Let $(  x,   X) \in \cF^{R1}_1$. Then $e\T x=1$ and $x\geq 0$. Moreover, there exists $(  u,   U,   R) \in \R^n \times \cS^n \times \R^{n \times n}$ such that $(  x,   u,   X,   U,   R)$ is  \eqref{R1rho}-feasible with $\rho=1$.
Since $  U \geq 0, ~\textrm{diag}(  U) =   u$, and $  U e =   u$, we obtain $  U = \textrm{Diag}(  u)$. By $  R -   U \leq 0$ and $  R \geq 0$, we obtain that $  R$ is a diagonal matrix. Similarly, using $  X -   R\T  \leq 0$, we conclude that $  X$ is a diagonal matrix. Since $  X e =   x$, we obtain that $  X = \textrm{Diag}(  x)$. Conversely, if $e\T    x = 1$, $  X = \textrm{Diag}(  x)$, and $  x \geq 0$, then it is easy to verify that $(  x,   u,   X,   U,   R) = (  x,   x,   X,   X,   X)$ is \eqref{R1rho}-feasible. It follows that $(  x,   X) \in \cF^{R1}_1$. \\[0.3em]
(ii) Let $\rho \in \{2,3,\ldots,n\}$. We clearly have $\cF^{R1}_\rho \subseteq \cF^{R1}$. Evidently, $ \cF^{R1}$ is a bounded polyhedron/polytope, so for the reverse inclusion, it suffices to show that each vertex of $\cF^{R1}$ belongs to $\cF^{R1}_\rho$. By Proposition~\ref{rlt-closed-form}, the set of vertices of $\cF^{R1}$ is given by \eqref{rlt-vertices}. If $(  x,  X) = (e^i, e^i (e^i)\T )$ for some $i = 1,\ldots,n$, then choose an arbitrary $  u \in \{0,1\}^n$ such that $  u_i = 1$ and $e\T    u = \rho$. If, on the other hand, $(  x,  X) = (\frac{1}{2}(e^i + e^j), \frac{1}{2}(e^i (e^j)\T +e^j (e^i)\T ))$ for some $1 \leq i < j \leq n$, then choose an arbitrary $  u \in \{0,1\}^n$ such that $  u_i = 1$, $  u_j = 1$, and $e\T    u = \rho$. In both cases then, define $  R =   x   u\T $ and $  U =   u   u\T $. It is easy to verify that $(  x,   u,   X,   U,   R) \in \R^n \times \R^n \times \cS^n \times \cS^n \times \R^{n \times n}$ is  \eqref{R1rho}-feasible, which implies that each vertex of $\cF^{R1}$ belongs to $\cF^{R1}_\rho$. We conclude that $\cF^{R1}_\rho = \cF^{R1}$.
\end{proof}

Lemma~\ref{F_R1_rho_desc} immediately gives rise to the following results.

\begin{corollary} \label{rlt-all-results}
\begin{enumerate}
    \item[(i)] For $\rho = 1$, \eqref{R1rho} is exact, i.e., $\ell^{R1}_1 (Q) = \ell_1(Q)$. 
    \item[(ii)] For each $\rho \in \{2,3,\ldots,n\}$, we have $\ell^{R1} (Q) = \ell^{R1}_\rho (Q) = \min\limits_{1\leq i \leq j \leq n} Q_{ij}$.
\end{enumerate}    
\end{corollary}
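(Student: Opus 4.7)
The plan is to invoke the closed-form description of $\cF^{R1}_\rho$ established in Lemma~\ref{F_R1_rho_desc} and then plug each case into the formulation \eqref{ell_rho_R1_alt} of $\ell^{R1}_\rho(Q)$ as a minimization of $\langle Q, X\rangle$ over $\cF^{R1}_\rho$. Almost all of the real work is already done by Lemma~\ref{F_R1_rho_desc}; what remains is to read off the two optimal values.

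For part~(i), I would substitute $X = \textrm{Diag}(x)$ into the objective, obtaining
\[
\ell^{R1}_1(Q) \;=\; \min\bigl\{\langle Q,\textrm{Diag}(x)\rangle : e\T x=1,\ x\ge 0\bigr\} \;=\; \min\Bigl\{\textstyle\sum_{i=1}^n Q_{ii}\,x_i : e\T x=1,\ x\ge 0\Bigr\}.
\]
This is a linear program over the standard simplex, whose minimum is attained at a vertex $e^i$ and therefore equals $\min_{1\le k\le n}Q_{kk}$. By the closed-form expression \eqref{rho1} in Lemma~\ref{simple-obs1}, this value is exactly $\ell_1(Q)$, establishing exactness of \eqref{R1rho} for $\rho=1$.

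For part~(ii), Lemma~\ref{F_R1_rho_desc}(ii) gives $\cF^{R1}_\rho = \cF^{R1}$ for every $\rho\in\{2,\ldots,n\}$, so the two relaxed problems have identical feasible sets (in the $(x,X)$-variables) and the same objective. Hence $\ell^{R1}_\rho(Q) = \ell^{R1}(Q)$. The remaining identity $\ell^{R1}(Q) = \min_{1\le i\le j\le n} Q_{ij}$ is precisely the content of Proposition~\ref{rlt-closed-form}, which follows from the vertex characterization \eqref{rlt-vertices} of $\cF^{R1}$: evaluating $\langle Q,X\rangle$ on the two families of vertices yields $Q_{ii}$ and $Q_{ij}$ respectively, and minimizing over $1\le i\le j\le n$ gives the claim.

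There is essentially no obstacle here, since both parts are short corollaries of results already stated. The only point that deserves a brief verification is that the projection defining $\cF^{R1}_\rho$ in \eqref{def_F_R1_rho} does carry the minimum of $\langle Q, X\rangle$ over the full feasible region of \eqref{R1rho} to the minimum over $\cF^{R1}_\rho$ via \eqref{ell_rho_R1_alt}; this is immediate from the fact that the objective depends only on $(x,X)$ and not on the auxiliary variables $(u,U,R)$.
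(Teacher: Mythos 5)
Your proof is correct and follows essentially the same route as the paper's, which simply cites Lemma~\ref{F_R1_rho_desc}, Lemma~\ref{simple-obs1}, and \eqref{ell_rho_R1_alt}; you merely spell out the linear-program-over-the-simplex computation for $\rho=1$ and invoke Proposition~\ref{rlt-closed-form} explicitly for the closed form in part~(ii). No gaps.
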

\begin{proof}
Both assertions follow from Lemma~\ref{F_R1_rho_desc}, Lemma~\ref{simple-obs1}, and \eqref{ell_rho_R1_alt}.  
\end{proof}

We arrive at the following exactness result for the classical RLT relaxation of sparse StQPs:

\begin{theorem} \label{exact-rlt}
\eqref{R1rho} is exact (i.e., $\ell^{R1}_\rho (Q) = \ell_\rho(Q)$) if and only if $\rho = 1$ or $\min\limits_{1\leq i \leq j \leq n} Q_{ij} = \min\limits_{1\le k \le n} Q_{kk}$. 
\end{theorem}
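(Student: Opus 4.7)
The plan is to split according to whether $\rho=1$ or $\rho\geq 2$, since Corollary~\ref{rlt-all-results} already resolves the value of $\ell^{R1}_\rho(Q)$ in either case. When $\rho=1$, exactness is immediate from part~(i) of the corollary. The substantive work is in the case $\rho\in\{2,\ldots,n\}$, where part~(ii) gives $\ell^{R1}_\rho(Q)=\min_{1\le i\le j\le n} Q_{ij}$, so the task reduces to deciding when $\ell_\rho(Q)$ equals this off-diagonal/diagonal minimum.

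For the ``if'' direction (assuming $\min_{1\le i\le j\le n} Q_{ij}=\min_{1\le k\le n}Q_{kk}$), I would chain the inequalities
\[
\ell^{R1}_\rho(Q) \;\le\; \ell_\rho(Q) \;\le\; \ell_1(Q) \;=\; \min_{1\le k\le n}Q_{kk} \;=\; \min_{1\le i\le j\le n}Q_{ij} \;=\; \ell^{R1}_\rho(Q),
\]
where the monotonicity $\ell_\rho(Q)\le \ell_1(Q)$ and the closed form for $\ell_1(Q)$ come from Lemma~\ref{simple-obs1}, the first inequality is the standard relaxation bound, and the middle equality is the hypothesis. Equality throughout forces exactness.

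For the ``only if'' direction I would argue by contrapositive: suppose $\rho\ge 2$ and $\min_{1\le i\le j\le n} Q_{ij} < \min_{1\le k\le n}Q_{kk}$, and show $\ell_\rho(Q) > \min_{1\le i\le j\le n} Q_{ij} = \ell^{R1}_\rho(Q)$. The key identity, using $e\T x=1$, is
\[
x\T Q x - \min_{1\le i\le j\le n}Q_{ij} \;=\; \sum_{k,l=1}^{n}\bigl(Q_{kl}-\min_{1\le i\le j\le n}Q_{ij}\bigr)\,x_k x_l,
\]
a sum of nonnegative terms whenever $x\ge 0$. Any $x\in F$ attaining the lower bound would have to satisfy $x_k x_l = 0$ for every pair with $Q_{kl}>\min_{1\le i\le j\le n}Q_{ij}$; by the strict hypothesis, this applies to every diagonal pair $(k,k)$, forcing $x_k=0$ for all $k$, which contradicts $e\T x=1$. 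Since $F_\rho$ is compact and nonempty and $x\T Qx$ is continuous, the minimum $\ell_\rho(Q)$ is attained, so the resulting inequality is strict.

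There is no real obstacle here: the argument is short and elementary, relying only on Corollary~\ref{rlt-all-results} and Lemma~\ref{simple-obs1}. The only point that warrants care is the reduction of equality in the nonnegative-sum identity to the diagonal condition; isolating the diagonal terms $(k,k)$ is what makes the strict inequality possible.
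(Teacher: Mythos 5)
Your proof is correct, and its skeleton matches the paper's: both handle $\rho=1$ via Corollary~\ref{rlt-all-results}(i), and your ``if'' direction is the same squeeze $\ell^{R1}_\rho(Q)\le\ell_\rho(Q)\le\ell_1(Q)=\min_k Q_{kk}=\min_{i\le j}Q_{ij}=\ell^{R1}_\rho(Q)$ that the paper uses. The one genuine difference is in the ``only if'' direction. The paper squeezes $\ell^{R1}(Q)=\ell(Q)$ out of the exactness assumption and then outsources the conclusion to the cited characterization in Proposition~\ref{rlt-closed-form} (that \eqref{R1} is exact iff the minimum entry of $Q$ lies on the diagonal). You instead prove the needed strict inequality from scratch: writing $x\T Qx-m=\sum_{k,l}(Q_{kl}-m)x_kx_l$ with $m=\min_{1\le i\le j\le n}Q_{ij}$ and $e\T x=1$, you observe that equality forces the diagonal terms to vanish, hence $x=0$, a contradiction; compactness of $F_\rho$ then gives $\ell_\rho(Q)>m=\ell^{R1}_\rho(Q)$. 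This is a sound and fully self-contained replacement for the citation — it effectively re-derives the relevant half of Proposition~\ref{rlt-closed-form}, and does so directly for $\ell_\rho(Q)$ rather than passing through $\ell(Q)$. What the paper's route buys is brevity by reusing an already-stated result; what yours buys is independence from that external characterization. The only step that needed care — isolating the diagonal terms and invoking attainment of the minimum on the compact set $F_\rho$ — you handle explicitly, so there is no gap.
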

\begin{proof}
By Corollary~\ref{rlt-all-results}(i), \eqref{R1rho} is exact for $\rho = 1$. Let $\rho \in \{2,3,\ldots,n\}$. If \eqref{R1rho} is exact, then Lemma~\ref{simple-obs1} and Corollary~\ref{rlt-all-results}(ii) imply that $\ell^{R1}_\rho (Q) = \min\limits_{1\leq i \leq j \leq n} Q_{ij} = \ell^{R1} (Q) \leq \ell(Q) \leq \ell_\rho (Q) = \ell^{R1}_\rho (Q) = \ell^{R1} (Q)$. The claim follows from Proposition~\ref{rlt-closed-form}. Conversely, if $\min\limits_{1\leq i \leq j \leq n} Q_{ij} = \min\limits_{1\le k \le n} Q_{kk}$, then $\ell^{R1}_\rho (Q) = \ell^{R1} (Q) = \min\limits_{1\leq i \leq j \leq n} Q_{ij} = \min\limits_{1\le k \le n} Q_{kk} = \ell(Q) = \ell_\rho (Q)$ by Lemma~\ref{simple-obs1}, Corollary~\ref{rlt-all-results}(ii), and Proposition~\ref{rlt-closed-form}. Therefore, \eqref{R1rho} is exact.   
\end{proof}

By Theorem~\ref{exact-rlt}, \eqref{R1rho} is exact if and only if $\rho = 1$ or (\ref{stqp}) itself already has a 1-sparse optimal solution. Otherwise, in view of Lemma~\ref{simple-obs1} and the relation $\ell^{R1} (Q) \leq \ell(Q)$, it follows from Corollary~\ref{rlt-all-results} that, for each $\rho \geq 2$, the lower bound $\ell^{R1}_\rho (Q)$ arising from \eqref{R1rho} is, in general, quite weak as it already agrees with the lower bound $\ell^{R1} (Q)$ obtained from the RLT relaxation \eqref{R1} of (\ref{stqp}). 

\subsection{SDP Relaxation} \label{SDP-Relaxation}

In this section, we consider the standard Shor relaxation of (\ref{sstqp}) in relation to that of \eqref{stqp}. 

The Shor relaxation of (\ref{stqp}) is given by
\[
\tag{R2} \label{R2} \quad \ell^{R2}(Q) := \inf\limits_{x \in \R^n, X \in \cS^n} \left\{\langle Q, X \rangle: (x,X) \in \cF^{R2}\right\},
\]
where
\begin{equation} \label{def_F_R2}
\cF^{R2} := \left\{(x,X) \in \R^n \times \cS^n: e\T  x = 1, \quad x \geq 0, \quad X \succeq x x\T  \right\}\, , 
\end{equation}
a closed convex set not necessarily bounded, which necessitates the use of `$\inf$' in~\eqref{R2}.
Indeed, we have the following well-known result about (R2); we include a short proof for the sake of completeness.

\begin{lemma} \label{shor-stqp}
If $Q \succeq 0$, then (R2) is exact (i.e., $\ell^{R2}(Q) = \ell (Q)$). If $Q \not \succeq 0$, then $\ell^{R2}(Q) = -\infty$.    
\end{lemma}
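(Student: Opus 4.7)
The plan is to handle the two cases separately and symmetrically: in both, the key observation is that for any feasible $(x,X) \in \cF^{R2}$ we can split $X = xx\T + S$ with $S \succeq 0$, which reduces the objective $\langle Q, X\rangle$ to $x\T Q x + \langle Q, S\rangle$. The behaviour of $\ell^{R2}(Q)$ is then governed by the sign of $\langle Q, S\rangle$ over $S \succeq 0$.

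First, I would establish the universal upper bound $\ell^{R2}(Q) \leq \ell(Q)$ that holds regardless of definiteness: for any $x \in F$ the pair $(x, xx\T)$ lies in $\cF^{R2}$ (since $xx\T \succeq xx\T$) and attains objective value $x\T Q x$, so taking the infimum over $F$ yields the inequality. This part is immediate.

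For the case $Q \succeq 0$, I would take an arbitrary $(x,X) \in \cF^{R2}$, set $S = X - xx\T \succeq 0$, and invoke the fact that the trace inner product of two positive semidefinite matrices is nonnegative, so $\langle Q, S\rangle \geq 0$. Hence $\langle Q, X\rangle \geq x\T Q x \geq \ell(Q)$, since $x \in F$. Combined with the previous paragraph, this gives $\ell^{R2}(Q) = \ell(Q)$.

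For the case $Q \not\succeq 0$, I would exhibit an explicit unbounded ray in $\cF^{R2}$ along which the objective tends to $-\infty$. Pick any $v \in \R^n$ with $v\T Q v < 0$, and fix any feasible point, say $x_0 = \tfrac{1}{n} e \in F$. For $t \geq 0$, the pair $(x_0, X_t)$ with $X_t := x_0 x_0\T + t\, v v\T$ is feasible since $X_t - x_0 x_0\T = tvv\T \succeq 0$, and the objective equals $x_0\T Q x_0 + t\, v\T Q v \to -\infty$ as $t \to \infty$.

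The argument is essentially routine; the only mild subtlety is remembering that $\cF^{R2}$ is unbounded, so one must use $\inf$ rather than $\min$ and explicitly construct a diverging sequence in the indefinite case, rather than appealing to compactness.
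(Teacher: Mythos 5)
Your proof is correct and follows essentially the same route as the paper: the bound $\ell^{R2}(Q)\leq\ell(Q)$ via rank-one feasible points, the inequality $\langle Q,X\rangle\geq x\T Qx$ from $X-xx\T\succeq 0$ when $Q\succeq 0$, and the explicit ray $X_t=x_0x_0\T+t\,vv\T$ with $v\T Qv<0$ in the indefinite case. The only cosmetic difference is that you fix $x_0=\tfrac{1}{n}e$ where the paper takes an arbitrary feasible $x$.
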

\begin{proof}
If $Q \succeq 0$, then, for any (R2)-feasible solution $(   x,    X) \in \R^n \times \cS^n$, we have $\langle Q,    X \rangle \geq    x\T  Q    x$ since $   X \succeq    x    x\T $, which implies that $\ell(Q) \geq \ell^{R2}(Q) \geq \ell (Q)$. If $Q \not \succeq 0$, then there exists $   d \in \R^n$ such that $   d\T  Q    d < 0$. Let $   x \in \R^n$ be any feasible solution of (\ref{stqp}) and 
let $   X (\lambda) =    x    x\T  + \lambda    d    d\T $, where $\lambda \geq 0$. The assertion follows by observing that $(   x,    X (\lambda)) \in \cF^{R2}$ for each $\lambda \geq 0$ and that the objective function of (R2) evaluated at $(   x,    X (\lambda))$ tends to $-\infty$ as $\lambda \to \infty$.      
\end{proof}

The Shor relaxation of (\ref{sstqp}) is given by
\[
\begin{array}{llrcl}
\tag{R2($\rho$)}\label{R2rho} & \ell^{R2}_\rho(Q) := \min\limits_{x \in \R^n, u \in \R^n, X \in \cS^n, U \in \cS^n, R \in \R^{n\times n}} & \langle Q,X \rangle & & \\
 & \textrm{s.t.} & & & \\
 & & e\T x & = &1\\
 & & e\T  u & = & \rho\\
 & & \textrm{diag}(U) & = & u\\
 & & x & \leq & u \\
 & & x & \geq & 0 \\
 & & \begin{bmatrix}
    1 & x\T  & u\T \\
    x & X   & R  \\
    u & R\T  & U  \\
    \end{bmatrix} & \succeq & 0.  
\end{array}
\]
Note that the constraint $u \leq e$ is implied by $\textrm{diag}(U) = u$ and the semidefiniteness constraint. 

Similar to the RLT relaxation of \eqref{sstqp}, let us introduce the following projection of the feasible region of \eqref{R2rho} onto $(x,X)$:
\begin{equation} \label{def_F_R2_rho}
\cF^{R2}_\rho := \left\{(x,X) \in \R^n \times \cS^n:  (x,u,X,U,R) \textrm{ is \eqref{R2rho}-feasible for some } (u,U,R) \in \R^n \times \cS^n \times \R^{n \times n}\right\}\, .
\end{equation}
We again observe that  
\begin{equation} \label{ell_rho_R2_alt}
\ell^{R2}_\rho (Q) =  \min\limits_{(x,X) \in \R^n \times \cS^n}\left\{\langle Q, X \rangle: (x,X) \in \cF^{R2}_\rho\right\}.    
\end{equation}

Our next result gives a complete description of $\cF^{R2}_\rho$ for each $\rho = 1,2,\ldots,n$.

\begin{lemma} \label{F_R2_rho_desc}
For each $\rho \in \{1,2,\ldots,n\}$, we have $\cF^{R2}_\rho = \cF^{R2}$, where $\cF^{R2}$ is given by \eqref{def_F_R2}.     
\end{lemma}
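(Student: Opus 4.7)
The inclusion $\cF^{R2}_\rho \subseteq \cF^{R2}$ is immediate: any feasible quintuple for \eqref{R2rho} already satisfies $e\T x = 1$ and $x \geq 0$, and the leading $(n+1)\times(n+1)$ principal submatrix of the big block matrix is PSD, which by a Schur complement with respect to the $(1,1)$ entry yields $X \succeq xx\T$. The substance of the proof lies in the reverse inclusion: given $(x,X)\in\cF^{R2}$, I would explicitly construct an extension $(u,U,R)$ such that $(x,u,X,U,R)$ is feasible for \eqref{R2rho}.

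The key observation driving the construction is that $u$ need not be binary in the Shor relaxation: the constraint $\textrm{diag}(U) = u$ together with positive semidefiniteness of the block only forces $u_i(1 - u_i) \geq 0$, i.e., $u \in [0,1]^n$. Assume $n \geq 2$ (the case $n = 1$ forces $\rho = x = u = 1$ and is trivial). I would define
\[
u := x + \tfrac{\rho - 1}{n - 1}(e - x)\, ,
\]
the linear interpolate between $x$ and $e$ with the unique weight giving $e\T u = \rho$. One then checks $u \in [0,1]^n$ (since $\tfrac{\rho-1}{n-1}\in [0,1]$ and $x \in [0,1]^n$) and $u \geq x$ (since $\rho \geq 1$ and $x \leq e$). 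Setting $v_i := u_i(1 - u_i) \geq 0$, I would then take
\[
R := x u\T, \qquad U := u u\T + \textrm{Diag}(v)\, ,
\]
so that $\textrm{diag}(U) = u$ holds by construction.

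It remains to verify the semidefinite block constraint. A Schur complement on the $(1,1)$ scalar entry reduces this to requiring
\[
\begin{bmatrix} X & R \\ R\T & U \end{bmatrix} \succeq \begin{bmatrix} x \\ u \end{bmatrix}\begin{bmatrix} x\T & u\T \end{bmatrix}\, .
\]
With our choices, the difference collapses to the block-diagonal matrix whose diagonal blocks are $X - xx\T$ and $\textrm{Diag}(v)$; the first is PSD because $(x,X)\in\cF^{R2}$, and the second because $v \geq 0$. The only real obstacle in the plan is finding a single $u$ that simultaneously lies in $[0,1]^n$, sums to $\rho$, and dominates $x$ componentwise; the explicit interpolation between $x$ and $e$ handles all three conditions at once, after which the ``rank-one up to diagonal slack'' choices of $R$ and $U$ fall out naturally.
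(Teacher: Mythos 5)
Your proposal is correct and follows essentially the same route as the paper: the identical interpolation $u = x + \tfrac{\rho-1}{n-1}(e-x)$, the choices $R = xu\T$ and $U = uu\T + \mathrm{Diag}(u - u\circ u)$, and the Schur-complement verification that the block difference is $\mathrm{Diag}(X - xx\T,\, \mathrm{Diag}(v)) \succeq 0$. No gaps.
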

\begin{proof}
We clearly have $\cF^{R2}_\rho \subseteq \cF^{R2}$. For the reverse inclusion, let $(  x,   X) \in \cF^{R2}$ so that $e\T x=1$ and $x\geq 0$, so also $x\leq e$. Furthermore $  X =   x   x\T  +   M$ for some $  M \succeq 0$. Define $  u =   x + \left( \frac{\rho - 1}{n-1}\right) (e -   x)$ so that $e\T    u = \rho$ and $0 \leq   x \leq   u \leq e$.  Let $  R =   x   u\T $ and $  U =   u  u\T  +   D$, where $  D \in \cS^n$ is a diagonal matrix such that $  D_{jj} =   u_j - (  u_j)^2 \geq 0,~j = 1,\ldots,n$. Note that $\textrm{diag}(  U) =   u$ and 
\[
\begin{bmatrix}
                              X   &   R  \\
                              R\T  &   U
                            \end{bmatrix} - \begin{bmatrix}   x \\   u \end{bmatrix} \begin{bmatrix}   x \\   u \end{bmatrix}\T  = \begin{bmatrix}
                              M   & 0  \\
                            0 &   D
                            \end{bmatrix} \succeq 0.
\] 
By Schur complementation, it follows that $(  x,   u,   X,   U,   R) \in \R^n \times \R^n \times \cS^n \times \cS^n \times \R^{n\times n}$ is  (R2($\rho$))-feasible. Therefore  $(  x,   X) \in \cF^{R2}_\rho$.
\end{proof}

Lemma~\ref{F_R2_rho_desc} reveals that none of the feasible solutions of $\cF^{R2}$ is cut off in the projection of the feasible region of \eqref{R2rho} for any choice of $\rho \in \{1,2,\ldots,n\}$. In view of \eqref{ell_rho_R2_alt}, we obtain the following corollary. 

\begin{corollary} \label{shor-sstqp}
For any $\rho \in \{1,\ldots,n\}$, we have $\ell^{R2}_\rho(Q) = \ell^{R2}(Q)$.  
\end{corollary}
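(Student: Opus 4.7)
The plan is essentially a one-line reduction: Lemma~\ref{F_R2_rho_desc} has already done all the substantive work by establishing $\cF^{R2}_\rho = \cF^{R2}$ for every $\rho \in \{1,\ldots,n\}$. Since the objective $\langle Q, X \rangle$ depends only on the projected pair $(x,X)$, and the alternative formulation \eqref{ell_rho_R2_alt} expresses $\ell^{R2}_\rho(Q)$ as the infimum of $\langle Q, X\rangle$ over $\cF^{R2}_\rho$, the two optimal values must coincide.

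Concretely, I would chain the equalities
\[
\ell^{R2}_\rho(Q) = \inf\{\langle Q, X\rangle : (x,X) \in \cF^{R2}_\rho\} = \inf\{\langle Q, X\rangle : (x,X) \in \cF^{R2}\} = \ell^{R2}(Q),
\]
where the first equality comes from \eqref{ell_rho_R2_alt}, the second from Lemma~\ref{F_R2_rho_desc}, and the third from the definition \eqref{R2} together with \eqref{def_F_R2}. No separate argument is needed at the level of the lifted formulation \eqref{R2rho} itself, because the projection identity in Lemma~\ref{F_R2_rho_desc} already accounts for all the freedom in the auxiliary variables $(u, U, R)$.

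The only subtlety worth flagging concerns the distinction between $\inf$ and $\min$: the set $\cF^{R2}$ need not be bounded, and by Lemma~\ref{shor-stqp} the common value may equal $-\infty$ when $Q \not\succeq 0$. However, since the two feasible regions are literally equal as sets of $(x,X)$, this causes no trouble --- the optimal values coincide whether finite or $-\infty$, and no attainment issue has to be addressed separately. In that sense there is no real obstacle here: the conceptual work has been absorbed into the proof of Lemma~\ref{F_R2_rho_desc}, where an explicit lifting $(u, U, R)$ had to be constructed from an arbitrary $(x,X) \in \cF^{R2}$ (via the convex combination $u = x + \tfrac{\rho-1}{n-1}(e-x)$ and a diagonal Schur-complement correction for $U$).
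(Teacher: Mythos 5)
Your argument is exactly the paper's: it derives the equality by chaining \eqref{ell_rho_R2_alt}, the set identity $\cF^{R2}_\rho = \cF^{R2}$ from Lemma~\ref{F_R2_rho_desc}, and the definition \eqref{R2}. The extra remark on the $\inf$ versus $\min$ distinction is a sensible clarification but does not change the proof.
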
 
\begin{proof}
The assertion follows from \eqref{R2}, \eqref{ell_rho_R2_alt}, and Lemma~\ref{F_R2_rho_desc}.
\end{proof}

Now we obtain the following exactness result for the Shor relaxation of the sparse StQP:

\begin{theorem} \label{exact-shor-sstqp}
\eqref{R2rho} is exact (i.e., $\ell^{R2}_\rho(Q) = \ell_\rho(Q)$) if and only if $Q \succeq 0$ and (\ref{stqp}) has a $\rho$-sparse optimal solution. 
\end{theorem}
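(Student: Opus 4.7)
The plan is to prove this essentially as a bookkeeping exercise built from the three results already at hand: Corollary~\ref{shor-sstqp} (which collapses $\ell^{R2}_\rho(Q)$ to $\ell^{R2}(Q)$), Lemma~\ref{shor-stqp} (which determines $\ell^{R2}(Q)$ in terms of the definiteness of $Q$), and the final assertion of Lemma~\ref{simple-obs1} (which characterizes when $\ell(Q) = \ell_\rho(Q)$). A useful preliminary observation is that $\ell_\rho(Q)$ is always finite, because $F_\rho$ is the union of finitely many standard simplices of lower dimension and is therefore compact, so the continuous quadratic $x \mapsto x\T Q x$ attains its minimum there.

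For the reverse implication, assume $Q \succeq 0$ and that \eqref{stqp} admits a $\rho$-sparse optimal solution. Then Lemma~\ref{simple-obs1} gives $\ell(Q) = \ell_\rho(Q)$, Lemma~\ref{shor-stqp} gives $\ell^{R2}(Q) = \ell(Q)$, and Corollary~\ref{shor-sstqp} gives $\ell^{R2}_\rho(Q) = \ell^{R2}(Q)$. Chaining these yields $\ell^{R2}_\rho(Q) = \ell_\rho(Q)$, which is exactness of \eqref{R2rho}.

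For the forward implication, suppose \eqref{R2rho} is exact, i.e.\ $\ell^{R2}_\rho(Q) = \ell_\rho(Q)$. I would first argue that $Q \succeq 0$ by contradiction: if $Q \not\succeq 0$, Lemma~\ref{shor-stqp} says $\ell^{R2}(Q) = -\infty$, and then Corollary~\ref{shor-sstqp} forces $\ell^{R2}_\rho(Q) = -\infty$; but $\ell_\rho(Q)$ is finite by the preliminary observation, contradicting exactness. Hence $Q \succeq 0$, and Lemma~\ref{shor-stqp} combined with Corollary~\ref{shor-sstqp} then gives $\ell^{R2}_\rho(Q) = \ell^{R2}(Q) = \ell(Q)$. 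Equality with $\ell_\rho(Q)$ therefore rewrites as $\ell(Q) = \ell_\rho(Q)$, which by the last assertion of Lemma~\ref{simple-obs1} is equivalent to \eqref{stqp} having a $\rho$-sparse optimal solution.

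There is essentially no hard step: the only subtlety is the finiteness of $\ell_\rho(Q)$ used to rule out the unbounded case when $Q \not\succeq 0$, and this is immediate from the compactness of $F_\rho$. Everything else is a direct concatenation of the cited results, so I expect the proof to be just a few lines.
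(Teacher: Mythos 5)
Your proposal is correct and follows exactly the same route as the paper, which simply cites Lemma~\ref{shor-stqp}, Corollary~\ref{shor-sstqp}, and Lemma~\ref{simple-obs1}; you have merely spelled out the chaining and the (correct, easy) finiteness of $\ell_\rho(Q)$ needed to exclude the case $Q \not\succeq 0$.
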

\begin{proof}
The assertion follows from Lemma~\ref{shor-stqp}, Corollary~\ref{shor-sstqp}, and Lemma~\ref{simple-obs1}.  
\end{proof}

Theorem~\ref{exact-shor-sstqp} shows that \eqref{R2rho} provides a finite lower bound if and only if $Q \succeq 0$. Furthermore, in this case, the bound is tight if and only if the problem \eqref{stqp} without any sparsity constraint already has a $\rho$-sparse optimal solution. It follows that \eqref{R2rho}, in general, is a weak relaxation. We close this section by specializing Theorem~\ref{exact-shor-sstqp} to the particular case with a rank-one $Q \in \cS^n$.

\begin{corollary} \label{rank-one-shor}
Let $Q = vv\T $, where $v \in \R^n$. If $v \in \R^n_+$ or $-v \in \R^n_+$ or $v_i = 0$ for some $i \in \{1,\ldots,n\}$, then $\ell^{R2}_\rho(Q) = \ell_\rho(Q)$ for each $\rho\in \{1,\ldots,n\}$. Otherwise,  $\ell^{R2}_1(Q) < \ell_1(Q) $ and $\ell^{R2}_\rho(Q) = \ell_\rho(Q)$ for each $\rho \in \{ 2,\ldots,n\}$.
\end{corollary}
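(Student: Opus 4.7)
The plan is to reduce everything to Theorem~\ref{exact-shor-sstqp}, using that $Q = vv\T \succeq 0$ holds automatically. Under this positive semidefiniteness, Theorem~\ref{exact-shor-sstqp} tells us that exactness of~\eqref{R2rho} is equivalent to~\eqref{stqp} admitting a $\rho$-sparse optimal solution, which by Lemma~\ref{simple-obs1} is equivalent to $\ell_\rho(Q) = \ell(Q)$. Since $x\T Q x = (v\T x)^2$, the entire statement therefore reduces to a sign-pattern case analysis of $v$ on the simplex $F$, combined with the identity $\ell^{R2}_\rho(Q) = \ell^{R2}(Q) = \ell(Q)$ (from Corollary~\ref{shor-sstqp} and Lemma~\ref{shor-stqp}).

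First, in each of the three listed subcases I would verify that~\eqref{stqp} has a $1$-sparse optimal solution. If $v \geq 0$, then on $F$ the quantity $v\T x$ is a convex combination of the entries of $v$, so $(v\T x)^2 \geq (\min_i v_i)^2$ with equality at the vertex $e^{i^*}$ where $v_{i^*} = \min_i v_i$; the case $-v \geq 0$ is symmetric after replacing $v$ by $-v$. If instead $v_i = 0$ for some $i$, then $e^i$ achieves the value $0$, which is clearly the global minimum of the nonnegative objective. In all three subcases the optimum is $1$-sparse and hence $\rho$-sparse for every $\rho \geq 1$, so Lemma~\ref{simple-obs1} gives $\ell_\rho(Q) = \ell(Q)$, and Theorem~\ref{exact-shor-sstqp} yields $\ell^{R2}_\rho(Q) = \ell_\rho(Q)$.

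Second, in the complementary case $v$ has no zero components and contains entries of both signs, so I would pick $i$ and $j$ with $v_i > 0 > v_j$ and set
\[
x^* = \alpha\, e^i + (1-\alpha)\, e^j, \qquad \alpha = \frac{-v_j}{v_i - v_j} \in (0,1),
\]
so that $v\T x^* = 0$ and $x^*$ is a $2$-sparse feasible point attaining objective value $0$. Hence $\ell(Q) = 0$ and~\eqref{stqp} has a $\rho$-sparse optimal solution for every $\rho \geq 2$, which via Lemma~\ref{simple-obs1} and Theorem~\ref{exact-shor-sstqp} gives $\ell^{R2}_\rho(Q) = \ell_\rho(Q)$ for $\rho \geq 2$. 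For $\rho = 1$, formula~\eqref{rho1} yields $\ell_1(Q) = \min_k v_k^2 > 0$ because no entry of $v$ vanishes, whereas Corollary~\ref{shor-sstqp} and Lemma~\ref{shor-stqp} give $\ell^{R2}_1(Q) = \ell^{R2}(Q) = \ell(Q) = 0$, producing the strict inequality $\ell^{R2}_1(Q) < \ell_1(Q)$.

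There is no real obstacle here beyond bookkeeping: the three "exactness" conditions on $v$ are not mutually exclusive but their union is precisely the complement of the condition used in the second part, so the case split is exhaustive. The only slightly delicate point is ensuring $\alpha \in (0,1)$ and the strictness $\min_k v_k^2 > 0$ in the last case, both of which rely on the assumption that no entry of $v$ is zero and that $v$ contains entries of both signs.
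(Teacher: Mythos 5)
Your proposal is correct and follows essentially the same route as the paper's own proof: the same case split on the sign pattern of $v$, the same $1$-sparse optimizers in the three exactness subcases, the same $2$-sparse zero of $v\T x$ in the remaining case, and the same appeal to Theorem~\ref{exact-shor-sstqp} together with $\ell_1(Q)=\min_k v_k^2>0$. The only difference is that you spell out a few steps (monotonicity of squaring on nonnegatives, the chain $\ell^{R2}_1(Q)=\ell^{R2}(Q)=\ell(Q)=0$) that the paper leaves implicit.
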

\begin{proof}
Let $Q = vv\T $, where $v \in \R^n$. Note that $x\T  Q x = (v\T  x)^2 \geq 0$ for each $x \in \R^n$. If $v \in \R^n_+$ (resp., $-v \in \R^n_+$ or $v_i = 0$ for some $i \in \{1,\ldots,n\}$), then (\ref{stqp}) has a 1-sparse optimal solution given by $e^j \in \R^n$, where $j = \arg \min\limits_{1\leq i \leq n} v_i$ (resp., $j = \arg \min\limits_{1\leq i \leq n}(-v_i)$ or $i = j$). The assertion follows from Theorem~\ref{exact-shor-sstqp}. Otherwise, there exist $i \in \{1,\ldots,n\}$ and $j \in \{1,\ldots,n\}$ such that $v_i < 0 < v_j$. 
Therefore, setting $x= \frac{v_j}{v_j-v_i}\, e^i - \frac{v_i}{v_j-v_i}\, e^j$,
we obtain $  x \in F$ and $  x\T  Q   x = (v\T    x)^2 = 0 = \ell(Q)$. On the other hand $\ell_1 (Q) = \min\limits_{1\le k \le n}Q_{kk} = \min\limits_{1\le k \le n}v_{k}^2 > 0$ by Lemma~\ref{simple-obs1}.     
\end{proof}

A comparison of Corollary~\ref{rank-one-shor} and Theorem~\ref{exact-rlt} reveals that the Shor relaxation \eqref{R2rho} can be strictly weaker than the RLT relaxation \eqref{R1rho} for $\rho = 1$, even when $Q \succeq 0$. 

\section{SDP-RLT Relaxation} \label{SDP-RLT-Relaxation}

In this section, we consider the SDP-RLT relaxations of (\ref{sstqp}) and \eqref{stqp} obtained by combining the corresponding RLT relaxations and SDP relaxations presented in Section~\ref{RLT-Relaxation} and Section~\ref{SDP-Relaxation}, respectively. In particular, our objective is to shed light on the properties of the combined relaxation in relation to those of the two individual relaxations.

The SDP-RLT relaxation of (\ref{stqp}) is given by
\[
\tag{R3} \label{R3} \quad \ell^{R3}(Q) := \min\limits_{x \in \R^n, X \in \cS^n} \left\{\langle Q, X \rangle: (x,X) \in \cF^{R3}\right\},
\]
where
\begin{equation} \label{def_F_R3}
\cF^{R3} := \left\{(x,X) \in \R^n \times \cS^n: e\T  x = 1, \quad X e = x, \quad x \geq 0, \quad X \geq 0, \quad X \succeq x x\T  \right\}.  
\end{equation}

A complete description of instances of \eqref{stqp} that admit exact SDP-RLT relaxations is given below.

\begin{theorem}
[G{\"o}kmen and \Yildirim~{\rm\cite{Goek22}}]
\label{exact_dnn_stqp}
\eqref{R3}
is exact (i.e., $\ell^{R3}(Q) = \ell(Q)$) if and only if (i) $n \leq 4$; or (ii) $n \geq 5$ and there exist $   x \in F$, $   P \succeq 0$, $N \in \cS^n$, $   N \geq 0$, $   \lambda \in \R$ such that $   P    x = 0$, $   x\T     N    x = 0$, and $Q =    P +    N +    \lambda E$. Furthermore, for any such decomposition, $   x \in F$ is an optimal solution of (\ref{stqp}) and $\ell^{R3}(Q) = \ell(Q) =    \lambda$.
\end{theorem}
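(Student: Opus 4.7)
My plan is to first observe that \eqref{R3}, after eliminating $x = Xe$, coincides with the classical doubly nonnegative (DNN) relaxation of \eqref{stqp}, and then to run SDP duality on this simpler form. The key reduction is that the constraint $X \succeq xx\T$ is automatic once the others are in force: writing any $X$ that is PSD, entrywise nonnegative, and satisfies $\langle E, X\rangle = 1$ as $X = VV\T$ with $V \in \R^{n\times k}$, and setting $t := V\T e$, one has $\|t\|^2 = e\T X e = 1$ and $Xe = Vt$, whence
\[
X - (Xe)(Xe)\T = V(I_k - tt\T) V\T \succeq 0.
\]
Hence $\cF^{R3}$, with $x$ recovered via $x = Xe$, coincides with the feasible set of the DNN relaxation $\min\{\langle Q, X\rangle : X \succeq 0,\ X \geq 0,\ \langle E, X\rangle = 1\}$. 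For case (i), $n \leq 4$, the classical fact that the doubly nonnegative and completely positive cones coincide in dimensions at most four, combined with the known exact completely positive reformulation $\ell(Q) = \min\{\langle Q, X\rangle : X \in \mathrm{CP}^n,\ \langle E, X\rangle = 1\}$ of \eqref{stqp}, forces $\ell^{R3}(Q) = \ell(Q)$ for every $Q \in \cS^n$.

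For case (ii), $n \geq 5$, the ``if'' direction is a direct pairing. Given $(x, P, N, \lambda)$ as stated and any $(\bar x, \bar X) \in \cF^{R3}$, I would decompose
\[
\langle Q, \bar X\rangle = \langle P, \bar X\rangle + \langle N, \bar X\rangle + \lambda\,\langle E, \bar X\rangle \;\geq\; 0 + 0 + \lambda,
\]
using $\langle P, \bar X\rangle \geq \bar x\T P \bar x \geq 0$ (from $P \succeq 0$ and $\bar X \succeq \bar x \bar x\T$), $\langle N, \bar X\rangle \geq 0$ (from $N \geq 0$ and $\bar X \geq 0$), and $\langle E, \bar X\rangle = e\T \bar x = 1$. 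Evaluating at the specific $x$ from the decomposition, $x\T Q x = x\T P x + x\T N x + \lambda = 0 + 0 + \lambda$, and together with $\ell^{R3}(Q) \leq \ell(Q)$ this sandwich yields $\ell^{R3}(Q) = \ell(Q) = \lambda$ and makes $x$ an \eqref{stqp}-optimum.

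The ``only if'' direction is the main content, and my plan is to obtain the decomposition as a dual certificate via strong SDP duality on the DNN reformulation. The Lagrangian dual is $\max\{\lambda : Q = P + N + \lambda E,\ P \succeq 0,\ N \geq 0\}$; Slater's condition is verified via the explicit interior point $X^\circ = \tfrac{1}{2n} I + \tfrac{1}{2n^2} e e\T$, which is positive definite, entrywise strictly positive, and satisfies $\langle E, X^\circ\rangle = 1$. Strong duality therefore holds and the dual optimum is attained by some $(P^*, N^*, \lambda^*)$ with $\lambda^* = \ell^{R3}(Q) = \ell(Q)$. Any \eqref{stqp}-optimum $x^* \in F$ then gives a rank-one primal optimum $X^* = x^*(x^*)\T$, and expanding $\langle Q, X^*\rangle = \langle P^*, X^*\rangle + \langle N^*, X^*\rangle + \lambda^*$ yields $(x^*)\T P^* x^* = 0$ and $(x^*)\T N^* x^* = 0$; since $P^* \succeq 0$, the first collapses to $P^* x^* = 0$, producing the desired decomposition with $x = x^*$. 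I expect the principal obstacle to be the Slater check for the full \eqref{R3} (which fails directly because $(\bar X - \bar x \bar x\T) e = 0$ for every feasible point, so the PSD constraint is never strict), but the reduction in the first paragraph routes around this by moving to the DNN formulation, where the explicit $X^\circ$ dispatches Slater cleanly.
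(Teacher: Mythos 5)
Your proposal is correct. Note first that the paper does not actually prove this theorem: it is imported verbatim from G\"okmen and \Yildirim~\cite{Goek22}, so there is no in-paper argument to compare against, and what you have written is a legitimate self-contained proof along the standard copositive-programming lines. Your opening reduction is right: with $X=VV\T$ and $t=V\T e$ one has $\|t\|^2=e\T Xe=1$, so $X-(Xe)(Xe)\T=V(I-tt\T)V\T\succeq 0$, and hence \eqref{R3} is exactly the doubly nonnegative program $\min\{\langle Q,X\rangle: X\succeq 0,\ X\ge 0,\ \langle E,X\rangle=1\}$ with $x$ recovered as $Xe$. Case (i) then follows from the Diananda/Maxfield--Minc coincidence of the doubly nonnegative and completely positive cones for $n\le 4$ together with the classical exact completely positive reformulation of \eqref{stqp}; both facts are citable. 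The ``if'' direction of (ii) and the ``furthermore'' clause are a clean weak-duality sandwich, $\lambda\le\ell^{R3}(Q)\le\ell(Q)\le x\T Qx=\lambda$. For the ``only if'' direction, your Slater point $X^\circ=\tfrac1{2n}I+\tfrac1{2n^2}ee\T$ does the job, and phrasing the dual as the Lagrangian dual $\max\{\lambda: Q-\lambda E-N\succeq 0,\ N\ge 0\}$ conveniently avoids having to check that $\cS^n_+ +\{N\in\cS^n:N\ge 0\}$ is a closed cone (which it is, but would otherwise need a word); dual attainment under primal Slater hands you $(P^*,N^*,\lambda^*)$, and complementary slackness at $X^*=x^*(x^*)\T$ forces $\langle P^*,X^*\rangle=\langle N^*,X^*\rangle=0$, whence $P^*x^*=0$ by positive semidefiniteness. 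The only points worth making explicit in a final write-up are that an optimizer $x^*$ of \eqref{stqp} exists by compactness of $F$, and that $X^*$ is primal optimal for the DNN program precisely because the exactness hypothesis $\ell^{R3}(Q)=\ell(Q)$ is in force --- that is where the assumption enters the ``only if'' argument.
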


We next consider the SDP-RLT relaxation of (\ref{sstqp}):

\[
\begin{array}{llrcl}
\tag{R3($\rho$)} \label{R3rho} &\ell^{R3}_\rho(Q) :=  \min\limits_{x \in \R^n, u \in \R^n, X\in \cS^n,U \in \cS^n, R \in \R^n} & \langle Q,X \rangle & & \\
 & \textrm{s.t.} & & & \\
 & & e\T x &= &1\\
 & & e\T u &=& \rho \\
 & & x & \leq & u \\
 & & x & \geq & 0 \\ 
  & & \textrm{diag}(U) & = &  u \\
   & & Xe & = & x \\
   & & R\T e & = & u \\
   & & Re & = & \rho \, x \\
    & & Ue & = & \rho \, u \\
    & & X - R\T  - R + U & \geq & 0 \\
    & & X - R\T  & \leq & 0 \\
    & & R - U & \leq & 0 \\
    & & X, R, U & \geq & 0 \\
    & & \begin{bmatrix}1 & x\T  & u\T  \\
     x& X&R \\
     u&R\T &U\end{bmatrix}& \succeq & 0. \\
\end{array}
\]

Similar to the RLT and SDP relaxations, consider the projection of the feasible region of \eqref{R3rho} onto $(x,X)$ given by
\begin{equation} \label{def_F_R3_rho}
\cF^{R3}_\rho = \left\{(x,X) \in \R^n \times \cS^n: (x,u,X,U,R) \textrm{ is \eqref{R3rho}-feasible for some }(u,U,R) \in \R^n \times \cS^n \times \R^{n \times n} \right\}.
\end{equation}
In a similar manner as above, we have  
\begin{equation} \label{ell_rho_R3_alt}
\ell^{R3}_\rho (Q) =  \min\limits_{(x,X) \in \R^n \times \cS^n}\left\{\langle Q, X \rangle: (x,X) \in \cF^{R3}_\rho\right\}\, .    
\end{equation}
It is also easy to see that
\begin{equation} \label{proj_rels_1}
\cF^{R3}_\rho \subseteq \cF^{R1}_\rho \cap \cF^{R2}_\rho\, ,  
\end{equation}
where $\cF^{R1}_\rho$ and $\cF^{R2}_\rho$ are given by \eqref{def_F_R1_rho} and \eqref{def_F_R2_rho}, respectively. Therefore, 
\begin{equation} \label{rels_R1_R2_R3}
\max\{\ell^{R1}_\rho(Q), \ell^{R2}_\rho(Q)\} \leq \ell^{R3}_\rho(Q) \leq \ell_\rho(Q) \quad \textrm{for all }\rho \in \{ 1,\ldots,n\}\, , 
\end{equation}
which implies that \eqref{R3rho} is at least as tight as each of \eqref{R1rho} and \eqref{R2rho}.

Our first result follows from the previous results on weaker relaxations.

\begin{corollary} \label{sdp-rlt-exact1}
If (i) $\rho = 1$, or (ii) $\min\limits_{1\leq i \leq j \leq n} Q_{ij} = \min\limits_{1\le k \le n} Q_{kk}$, or (iii) $Q \succeq 0$ and (\ref{stqp}) has a $\rho$-sparse optimal solution, then \eqref{R3rho} is exact.
\end{corollary}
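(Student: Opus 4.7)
The plan is to deduce each of the three cases as an immediate consequence of the sandwich inequality \eqref{rels_R1_R2_R3}, namely
\[
\max\{\ell^{R1}_\rho(Q), \ell^{R2}_\rho(Q)\} \leq \ell^{R3}_\rho(Q) \leq \ell_\rho(Q),
\]
combined with the exactness results already established for the weaker relaxations \eqref{R1rho} and \eqref{R2rho}. The basic mechanism is: if any of the lower relaxations is exact under a given hypothesis, then $\ell_\rho(Q)$ is squeezed between itself and $\ell^{R3}_\rho(Q)$, forcing equality.

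For case (i), when $\rho = 1$, Corollary~\ref{rlt-all-results}(i) gives $\ell^{R1}_1(Q) = \ell_1(Q)$. Plugging this into \eqref{rels_R1_R2_R3} with $\rho = 1$ yields $\ell_1(Q) \leq \ell^{R3}_1(Q) \leq \ell_1(Q)$, and hence $\ell^{R3}_1(Q) = \ell_1(Q)$. For case (ii), when $\min_{1\le i\le j\le n} Q_{ij} = \min_{1\le k\le n} Q_{kk}$, Theorem~\ref{exact-rlt} gives $\ell^{R1}_\rho(Q) = \ell_\rho(Q)$ for the given $\rho$, and the same sandwich argument delivers $\ell^{R3}_\rho(Q) = \ell_\rho(Q)$. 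For case (iii), when $Q \succeq 0$ and \eqref{stqp} has a $\rho$-sparse optimal solution, Theorem~\ref{exact-shor-sstqp} gives $\ell^{R2}_\rho(Q) = \ell_\rho(Q)$, and again the sandwich closes.

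There is essentially no technical obstacle here, since the hard work has already been done in Corollary~\ref{rlt-all-results}, Theorem~\ref{exact-rlt}, and Theorem~\ref{exact-shor-sstqp}; the only point requiring care is to invoke the correct exactness statement in each case and to note that the inclusion $\cF^{R3}_\rho \subseteq \cF^{R1}_\rho \cap \cF^{R2}_\rho$ from \eqref{proj_rels_1}, together with the representations \eqref{ell_rho_R1_alt}, \eqref{ell_rho_R2_alt}, and \eqref{ell_rho_R3_alt}, is what justifies the left-hand inequality in \eqref{rels_R1_R2_R3}, while the right-hand inequality follows because every lift of a feasible point of \eqref{sstqp} yields an \eqref{R3rho}-feasible tuple with matching objective value.
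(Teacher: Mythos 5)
Your proposal is correct and follows exactly the paper's own argument: the paper likewise deduces the corollary from Theorem~\ref{exact-rlt}, Theorem~\ref{exact-shor-sstqp}, and the sandwich inequality \eqref{rels_R1_R2_R3}. Nothing is missing.
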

\begin{proof}
The assertion follows from Theorem~\ref{exact-rlt}, Theorem~\ref{exact-shor-sstqp}, and \eqref{rels_R1_R2_R3}.    
\end{proof}

\subsection{Projected Feasible Sets and Their Inner Approximations}

We now focus on the sets $\cF^{R3}_\rho$, $\rho \in \{ 1,\ldots,n\}$. By Lemma~\ref{F_R1_rho_desc}, Lemma~\ref{F_R2_rho_desc}, \eqref{def_F_R3}, and \eqref{proj_rels_1}, 
\begin{eqnarray} \label{proj_rels_2}
 \cF^{R3}_1 &\subseteq &\left\{(x,X) \in \R^n \times \cS^n: e\T  x = 1, \, X = \textrm{Diag}(x), \, X \succeq x x\T , \, x \geq 0\right\} \subseteq \cF^{R3}, \label{proj_rels_2a}\\
 \cF^{R3}_\rho &\subseteq &\left\{(x,X) \in \R^n \times \cS^n: e\T  x = 1, \, Xe = x, \, X \succeq x x\T , \, X \geq 0, \,x \geq 0\right\} = \cF^{R3}, \;\rho \geq 2\,. \label{proj_rels_2b}
\end{eqnarray}

Next, we consider inner approximations of the sets $\cF^{R3}_\rho$, where $\rho \in \{1,2,\dots,n\}$. 

\begin{proposition} \label{inner-rank-one}
   For any fixed $\rho\in \{ 1,\ldots,n\}$, 
   consider the corresponding formulation \eqref{sstqp}. Then, 
   we have 
\begin{equation} \label{proj_rels_4}
\textrm{conv}\left\{(x,x x\T ): x \in F_\rho\right\} \subseteq \cF^{R3}_\rho \, , 
\end{equation} 
where $F_\rho$ and $\cF^{R3}_\rho$ are given by \eqref{def_F_rho} and \eqref{def_F_R3_rho}, respectively.
\end{proposition}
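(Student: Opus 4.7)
The plan is to use convexity of $\cF^{R3}_\rho$ to reduce the inclusion to a membership check on the generators $(x,xx\T)$, and then to explicitly construct, for each such generator, a lifted feasible point $(x,u,X,U,R)$ for \eqref{R3rho}.

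Because $\cF^{R3}_\rho$ is the projection of the (convex) feasible set of the SDP \eqref{R3rho} onto the $(x,X)$-coordinates, it is itself convex. Therefore, to prove \eqref{proj_rels_4}, it suffices to show $(x,xx\T) \in \cF^{R3}_\rho$ for every $x \in F_\rho$. Fix such an $x$, and let $\mathbf{K} = \{i : x_i > 0\}$, so that $|\mathbf{K}| \le \rho$. I will pick any $u \in \{0,1\}^n$ with $\mathbf{K} \subseteq \mathrm{supp}(u)$ and $e\T u = \rho$ (possible since $|\mathbf{K}| \le \rho \le n$), and then define the lifted quantities by the natural rank-one choice
\[
X = xx\T,\qquad R = xu\T,\qquad U = uu\T.
\]

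The verification of the constraints of \eqref{R3rho} then amounts to routine identities and sign checks, which I would carry out in order: the linear equalities $e\T x=1$, $e\T u=\rho$, $Xe=x$, $R\T e=u$, $Re=\rho x$, $Ue=\rho u$ all drop out from $e\T x=1$ and $e\T u=\rho$; the relation $\textrm{diag}(U)=u$ follows from $u \in \{0,1\}^n$; and $x\leq u$ holds since $x_i=0$ whenever $u_i=0$ while $x_i \le 1 = u_i$ for $i \in \mathrm{supp}(u)$. For the RLT componentwise inequalities, observe
\[
X - R\T - R + U = (x-u)(x-u)\T,\qquad X - R\T = (x-u)x\T,\qquad R - U = (x-u)u\T,
\]
and use $x-u \le 0$ together with $x,u \ge 0$ to obtain the required signs; nonnegativity of $X,R,U$ is immediate. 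Finally, the semidefiniteness of the $(2n+1)\times(2n+1)$ block matrix holds because with our choices it factors as $[1;x;u][1;x;u]\T$.

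The only real subtlety is choosing $u$ so that the binary-style constraint $\textrm{diag}(U)=u$ is compatible with $x \le u$; this is where the assumption $\|x\|_0 \le \rho$ enters, since it guarantees we can extend $\mathrm{supp}(x)$ to a set of exactly $\rho$ indices on which $u$ is set to $1$. Apart from that, the argument is a direct verification. I do not anticipate a genuine obstacle: the key insight is simply that the natural rank-one lift $(xx\T, xu\T, uu\T)$ built from any $\rho$-sparse $x$ and any binary $u \ge x$ with $e\T u=\rho$ satisfies every constraint of \eqref{R3rho}, so convex combinations of such points land in $\cF^{R3}_\rho$ by convexity.
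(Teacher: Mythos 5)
Your proof is correct and follows essentially the same route as the paper: lift each $\rho$-sparse $x$ together with a binary $u\geq x$, $e\T u=\rho$, to the rank-one point $(x,u,xx\T,uu\T,xu\T)$, check feasibility for \eqref{R3rho}, and conclude by convexity of the projection. The paper simply asserts the feasibility check as obvious, whereas you spell out the factorizations $(x-u)(x-u)\T$, $(x-u)x\T$, $(x-u)u\T$; this is the same argument in more detail.
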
 
\begin{proof}
For any \eqref{sstqp}-feasible solution $(   x,   u) \in \R^n \times \R^n$, we define $   X =    x    x\T $, $   R =    x    u\T $, and $   U =    u    u\T $. Then obviously
$(   x,    u,    X,    U,    R) \in \R^n \times \R^n \times \cS^n \times \cS^n \times\R^{n \times n} $ is \eqref{R3rho}-feasible. The claim now follows by~\eqref{def_F_R3_rho} and the convexity of $\cF^{R3}_\rho$. 
\end{proof}

In the remainder of this section, we identify further properties of the sets $\cF^{R3}_\rho$, where $\rho \in \{1,2,\dots,n\}$ and their implications on the tightness of the lower bound $\ell^{R3}_\rho(Q)$.

\subsection{The Extremely Sparse Case $\rho=1$}

In this section, we give an exact description of the set $\cF^{R3}_1$ and discuss its implications. We start with a technical lemma.

\begin{lemma}\label{psd-condition}
    For any $a \in \R^n_+$ such that $e\T a  \leq 1$, we have $ \textrm{Diag}(a) - aa\T  \succeq 0$. 
\end{lemma}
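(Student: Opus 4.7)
The plan is to verify positive semidefiniteness directly by showing $y^\top(\mathrm{Diag}(a) - aa^\top) y \ge 0$ for every $y \in \R^n$. Expanding the quadratic form gives
\[
y^\top(\mathrm{Diag}(a) - aa^\top) y \;=\; \sum_{i=1}^n a_i y_i^2 \;-\; \left(\sum_{i=1}^n a_i y_i\right)^{\!2}.
\]
So the lemma reduces to the scalar inequality $(\sum_i a_i y_i)^2 \leq \sum_i a_i y_i^2$ whenever $a \geq 0$ and $e^\top a \leq 1$.

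The main idea is the factorization $a_i y_i = \sqrt{a_i}\cdot(\sqrt{a_i}\,y_i)$ (legitimate since $a_i \ge 0$), after which the Cauchy--Schwarz inequality yields
\[
\left(\sum_{i=1}^n a_i y_i\right)^{\!2} \;\le\; \left(\sum_{i=1}^n a_i\right)\!\left(\sum_{i=1}^n a_i y_i^2\right).
\]
Combining this with the hypothesis $\sum_i a_i = e^\top a \le 1$ gives $(\sum_i a_i y_i)^2 \le \sum_i a_i y_i^2$, and hence $y^\top(\mathrm{Diag}(a) - aa^\top) y \ge 0$ for every $y$, which is the desired conclusion.

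An alternative route, useful to mention for intuition, is to pad $a$ to a probability vector: set $a' = (a^\top,\, 1 - e^\top a)^\top \in \R^{n+1}_+$, so $e^\top a' = 1$. Then $\mathrm{Diag}(a') - a'(a')^\top$ is the covariance matrix of the discrete distribution assigning mass $a'_i$ to the standard basis vector $e^i \in \R^{n+1}$, hence PSD; restricting to the leading $n \times n$ principal submatrix recovers $\mathrm{Diag}(a) - aa^\top \succeq 0$. Since no real obstacle arises (the Cauchy--Schwarz proof is two lines), I would simply present the direct calculation; the only thing to be mindful of is that the condition $e^\top a \le 1$ is used exactly once and is essential, since without it the Cauchy--Schwarz bound would not collapse to the required inequality.
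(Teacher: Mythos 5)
Your proof is correct. It takes a mildly different route from the paper's: you verify the quadratic form directly, writing $y^\top(\mathrm{Diag}(a)-aa^\top)y=\sum_i a_iy_i^2-(\sum_i a_iy_i)^2$ and applying Cauchy--Schwarz to the factorization $a_iy_i=\sqrt{a_i}\cdot(\sqrt{a_i}\,y_i)$, then invoking $e^\top a\le 1$ once. The paper instead first restricts to the support of $a$ (so that $\mathrm{Diag}(\sqrt{a})$ is invertible), performs the congruence $\mathrm{Diag}(\sqrt{a})^{-1}(\mathrm{Diag}(a)-aa^\top)\mathrm{Diag}(\sqrt{a})^{-1}=I-\sqrt{a}\sqrt{a}^\top$, and observes that the unique nonzero eigenvalue of the rank-one matrix $\sqrt{a}\sqrt{a}^\top$ is $e^\top a\le 1$. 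The two arguments are essentially dual presentations of the same fact (your Cauchy--Schwarz step is exactly the statement $\|\sqrt{a}\|^2\, I\succeq\sqrt{a}\sqrt{a}^\top$ conjugated back), but yours is slightly cleaner in that it needs no case distinction for zero components, while the paper's makes the role of the spectrum of the rank-one perturbation explicit. Your covariance-matrix remark is a nice sanity check but, as you say, not needed.
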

\begin{proof}
Let $a_\mathbf{P}$ denote the subvector of $a$ with strictly positive components. Note that $\textrm{Diag}(a) - aa\T  \succeq 0$ if and only if $\textrm{Diag}(a_\mathbf{P}) - a_\mathbf{P}a_\mathbf{P}\T  \succeq 0$. Therefore, without loss of generality, we may and do assume that $a = a_\mathbf{P}$. We have $\textrm{Diag}(a) - aa\T  \succeq 0$ if and only if $\textrm{Diag}(\sqrt{a})^{-1}  (\textrm{Diag}(a) - aa\T ) \textrm{Diag}(\sqrt{a})^{-1} = I - \sqrt{a}\sqrt{a}\T  \succeq 0$, where $\sqrt{a} := [\sqrt{a_1},\sqrt{a_2},\ldots,\sqrt{a_n}]\T $. The only nonzero eigenvalue of the rank-one matrix $\sqrt{a}\sqrt{a}\T $ is $\sqrt{a}\T \sqrt{a} = e\T  a \leq 1$. Therefore, $ I - \sqrt{a}\sqrt{a}\T   \succeq 0$, which implies $\textrm{Diag}(a) - aa\T  \succeq 0$. 
\end{proof}

By Lemma~\ref{psd-condition}, it is easy to see that the constraint $X - x x\T  \succeq 0$ on the right-hand side of \eqref{proj_rels_2a} is redundant. Therefore, by Lemma~\ref{F_R1_rho_desc}, we obtain
\begin{equation} \label{proj_rels_3}
\cF^{R3}_1 \subseteq \left\{(x,X) \in \R^n \times \cS^n: e\T  x = 1, \quad X = \textrm{Diag}(x), \quad x \geq 0\right\} = \cF^{R1}_1.    
\end{equation}

Our next result shows that the inclusion in \eqref{proj_rels_3} actually holds with equality, thereby yielding an exact description of $\cF^{R3}_1$.

\begin{lemma} \label{F_R3_1_desc}
We have
\begin{equation} \label{proj_rels_5}
\cF^{R3}_1 = \cF^{R1}_1 = \textrm{conv}\left\{(x,x x\T ): x \in F_1 \right\} = \textrm{conv}\left\{(e^j,e^j (e^j)\T ): j \in \{ 1,\ldots,n \}\right\},     
\end{equation}
where $\cF^{R3}_1$ and $\cF^{R1}_1$ are defined as in \eqref{def_F_R3_rho} and \eqref{def_F_R1_rho}, respectively.
\end{lemma}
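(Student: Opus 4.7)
The plan is to assemble the claimed chain of equalities from a short cycle of inclusions, most of which are already at hand from earlier results. First I will note that Lemma~\ref{simple-obs1} tells us $F_1 = \{e^1,\ldots,e^n\}$, so the two convex hulls on the right of~\eqref{proj_rels_5} are literally the same set, reducing the problem to two nontrivial inclusions.

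Next I will record the inclusion $\cF^{R3}_1 \subseteq \cF^{R1}_1$, which is just~\eqref{proj_rels_3}: it follows from Lemma~\ref{F_R1_rho_desc}(i) combined with Lemma~\ref{psd-condition}, which makes the semidefiniteness constraint $X \succeq xx\T$ automatic when $X=\textrm{Diag}(x)$ with $x \in F$. For the reverse inclusion $\cF^{R1}_1 \subseteq \textrm{conv}\{(e^j, e^j(e^j)\T) : j \in \{1,\ldots,n\}\}$, I will take an arbitrary $(x,X) \in \cF^{R1}_1$. By Lemma~\ref{F_R1_rho_desc}(i) we have $X=\textrm{Diag}(x)$ with $e\T x = 1$ and $x \geq 0$, so the decomposition
\[
(x,X) \;=\; \sum_{j=1}^n x_j\,\bigl(e^j,\, e^j (e^j)\T\bigr)
\]
exhibits $(x,X)$ as a convex combination of the vertices on the right-hand side of~\eqref{proj_rels_5}.

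Finally, to close the cycle, I will invoke Proposition~\ref{inner-rank-one} with $\rho=1$ to get $\textrm{conv}\{(x, xx\T) : x \in F_1\} \subseteq \cF^{R3}_1$. Stringing the inclusions together yields
\[
\cF^{R3}_1 \;\subseteq\; \cF^{R1}_1 \;\subseteq\; \textrm{conv}\{(e^j, e^j(e^j)\T)\} \;=\; \textrm{conv}\{(x,xx\T) : x \in F_1\} \;\subseteq\; \cF^{R3}_1,
\]
forcing equality throughout and establishing~\eqref{proj_rels_5}.

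There is no genuine obstacle here: the proof is essentially bookkeeping, with the only substantive ingredient being the observation that an element of $\cF^{R1}_1$ has the form $(x,\textrm{Diag}(x))$, which immediately lies in the convex hull of the unit-vector lifts because $\textrm{Diag}(x) = \sum_j x_j e^j(e^j)\T$. The mildly delicate point is keeping track of why the PSD constraint in $\cF^{R3}_1$ does not cut anything further from $\cF^{R1}_1$, and this is precisely what Lemma~\ref{psd-condition} has been set up to deliver.
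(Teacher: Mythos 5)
Your proof is correct and follows essentially the same route as the paper, which likewise cites \eqref{proj_rels_3}, Proposition~\ref{inner-rank-one}, and the identification $\cF^{R1}_1 = \textrm{conv}\{(e^j, e^j(e^j)\T): 1 \le j \le n\}$; you merely spell out the convex decomposition $\textrm{Diag}(x) = \sum_j x_j e^j(e^j)\T$ that the paper leaves as an observation.
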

\begin{proof}
The assertion follows from the observation that $\cF_1^{R1} = \textrm{conv}\left\{(e^j,e^j (e^j)\T ): 1\le j \le n \right\}$ in conjunction with Proposition~\ref{inner-rank-one} and~\eqref{proj_rels_3}. 
\end{proof}

Lemma~\ref{F_R3_1_desc} reveals that the SDP-RLT relaxation \eqref{R3rho} is identical to the RLT relaxation \eqref{R1rho} for $\rho = 1$: semidefinite constraints in \eqref{R3rho} are redundant.

\subsection{Case of Larger Sparsity $\rho \geq 2$}

In this section, we focus on the sets $\cF^{R3}_\rho$, where $\rho \in \{2,3,\ldots,n\}$, and establish several properties and relations. Our first result strengthens the inner approximation of $\cF^{R3}_\rho$ given by Proposition~\ref{inner-rank-one}.

\begin{lemma} \label{comp1-R3-R3rho}
We have 
\[
\left\{(x,X) \in \cF^{R3}: x \in F_\rho\right\} \subseteq \cF^{R3}_\rho, \quad \textrm{all }\rho \in \{ 2,\ldots,n\}\, ,
\]
where $F_\rho$, $\cF^{R3}$, and $\cF^{R3}_\rho$ are given by \eqref{def_F_rho}, \eqref{def_F_R3} and \eqref{def_F_R3_rho}, respectively.
\end{lemma}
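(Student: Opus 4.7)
The plan is to take any $(x,X)\in\cF^{R3}$ with $x\in F_\rho$ and explicitly build an \eqref{R3rho}-feasible quintuple $(x,u,X,U,R)$, keeping $x$ and $X$ unchanged so that the projection yields $(x,X)\in\cF^{R3}_\rho$. Let $\mathbf{P}=\{i:x_i>0\}$, so $|\mathbf{P}|\leq\rho$. Since $|\mathbf{P}|\leq\rho\leq n$, I can pick any $\mathbf{P}'\supseteq\mathbf{P}$ with $|\mathbf{P}'|=\rho$ and set $u\in\{0,1\}^n$ to be the indicator vector of $\mathbf{P}'$, $U=uu\T$, and $R=xu\T$. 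The idea is that, because $u$ is 0/1 with support containing $\text{supp}(x)$, the auxiliary variables look exactly as in the integral mixed-binary formulation, and the only nontrivial information lives inside the block indexed by $\mathbf{P}$, where $X$ already carries the PSD and nonnegativity content of $\cF^{R3}$.

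The linear identities $e\T u=\rho$, $\textrm{diag}(U)=u$, $x\leq u$, $x\geq 0$, $Xe=x$, $R\T e=u(x\T e)=u$, $Re=x(u\T e)=\rho x$, and $Ue=u(u\T e)=\rho u$ are immediate from the construction and from $e\T x=1$. The nonnegativity constraints $X,R,U\geq 0$ are obvious. The key observation driving the sign inequalities is that $Xe=x$ combined with $X\geq 0$ and $x_i=0$ for $i\notin\mathbf{P}$ forces row (and by symmetry column) $i$ of $X$ to vanish for $i\notin\mathbf{P}$; equivalently, $X$ is supported on $\mathbf{P}\times\mathbf{P}$. Then $X-R\T=X-ux\T$ is entrywise $\leq 0$: for $i\in\mathbf{P}'$ we need $X_{ij}\leq x_j$, which follows from $\sum_k X_{ik}=x_i$ combined with $X\geq 0$ when $i\in\mathbf{P}$ and trivially when $i\in\mathbf{P}'\setminus\mathbf{P}$ since $X_{ij}=0$; for $i\notin\mathbf{P}'$ we have $u_i=0$ and also $X_{ij}=0$. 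Similarly $R-U=(x-u)u\T\leq 0$ since $x\leq u$ and $u\geq 0$.

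The most delicate entrywise check is $X-R\T-R+U\geq 0$, whose $(i,j)$ entry is $X_{ij}-u_ix_j-x_iu_j+u_iu_j$. A brief case analysis on whether $i,j$ lie in $\mathbf{P}$, $\mathbf{P}'\setminus\mathbf{P}$, or outside $\mathbf{P}'$ reduces everything to two facts: $X$ vanishes outside $\mathbf{P}\times\mathbf{P}$, and for $i,j\in\mathbf{P}$ the entry equals $X_{ij}+1-x_i-x_j\geq 0$ because $X_{ij}\geq 0$ and $x_i+x_j\leq e\T x=1$. Finally, the PSD constraint is settled by a Schur complement: the large block matrix is PSD iff $\begin{bmatrix}X-xx\T & R-xu\T\\ R\T -ux\T & U-uu\T\end{bmatrix}\succeq 0$, which with the chosen $R$ and $U$ collapses to $\begin{bmatrix}X-xx\T & 0\\ 0 & 0\end{bmatrix}\succeq 0$, and $X-xx\T\succeq 0$ is one of the defining constraints of $\cF^{R3}$. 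I expect the bookkeeping of the $\mathbf{P}$/$\mathbf{P}'\setminus\mathbf{P}$/complement cases to be the only real friction; the PSD step is a one-line Schur complement once $R$ and $U$ are chosen rank-one.
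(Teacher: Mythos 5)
Your construction is exactly the paper's: choose $u \in \{0,1\}^n$ with $x \leq u$ and $e\T u = \rho$, set $R = x u\T$ and $U = u u\T$, verify the linear and sign constraints using the fact that $Xe = x$ and $X \geq 0$ force $0 \leq X_{ij} \leq \min\{x_i,x_j\}$ (and in particular that $X$ vanishes off the support of $x$), and dispose of the semidefiniteness constraint by the same one-line Schur complement reducing everything to $X - xx\T \succeq 0$. So the route is identical; the one genuine gap is in your verification of $X - R\T - R + U \geq 0$ on the diagonal.

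For $i = j$ with $x_i > 0$ the relevant entry is $X_{ii} - 2x_i + 1$, and your justification ``$x_i + x_j \leq e\T x = 1$'' fails whenever $x_i > 1/2$: for instance $x = (0.6, 0.4, 0, \ldots, 0)\T$ with $X = xx\T$ lies in $\cF^{R3}$ with $x \in F_2$, yet $x_1 + x_1 = 1.2 > 1$. Entrywise nonnegativity of $X$ does not rescue the diagonal case; you need the PSD part of the definition of $\cF^{R3}$. Either note that $X \succeq xx\T$ gives $X_{ii} \geq x_i^2$, hence $X_{ii} - 2x_i + 1 \geq (1-x_i)^2 \geq 0$, or, as the paper does, write $X_{ii} - 2x_i + 1 = (e^i - e)\T X (e^i - e) \geq 0$ using $X \succeq 0$ and $Xe = x$. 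With that one line inserted, your argument is complete and matches the paper's in substance.
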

\begin{proof}
Fix $\rho \in \{2,\ldots,n\}$ and let $(  x,   X) \in \cF^{R3}$ with $\|   x \|_0 \leq \rho$. Choose $  u \in \{0,1\}^n$ such that $  x \leq   u$ and $e\T    u = \rho$. Define $  R =   x   u\T $ and $  U =   u   u\T $. Clearly, $\textrm{diag}(  U) =   u$, $  R\T  e =   u,   R e = \rho   x$, $  U e = \rho   u$, $  R -   U \leq 0$, $  R \geq 0$, and $  U \geq 0$. Since $  X =   x   x\T  +   M$ for some $  M \succeq 0$, we obtain
\[
\begin{bmatrix}
  X   &   R  \\
  R\T  &   U
\end{bmatrix} - \begin{bmatrix}   x \\   u \end{bmatrix} \begin{bmatrix}   x \\   u \end{bmatrix}\T  = \begin{bmatrix}
  M   & 0  \\
0 & 0
\end{bmatrix} \succeq 0.
\] 

Next, we consider the constraint $  X -   R\T  \leq 0$. Since $  X \geq 0$ and $  X e =   x$, we obtain $0 \leq   X_{ij} \leq \min\{  x_i,  x_j\}$ for each $1 \leq i \leq j \leq n$. Therefore, if $\min\{  x_i,  x_j\} = 0$, then $  X_{ij} -   u_i   x_j = -   u_i   x_j \leq 0$. On the other hand, if $\min\{  x_i,  x_j\} > 0$, then $  u_i = 1$, which implies that $  X_{ij} -   u_i   x_j =   X_{ij} -   x_j \leq 0$. It follows that $  X -   R\T  \leq 0$.

Finally, we need to show that $  X -   R -   R\T  +   U \geq 0$. For each $1 \leq i \leq j \leq n$, if $\min\{  x_i,  x_j\} = 0$, then $  X_{ij} = 0$ and $\min\{  R_{ij},  R_{ji}\} = \min\{  x_i   u_j,  x_j   u_i\} = 0$. Therefore,
$$  X_{ij} -   R_{ij} -   R_{ji} +   U_{ij} = 0-\max\{ R_{ij},  R_{ji}\}-0 + u_iu_j = - \max\{  x_i   u_j,  x_j   u_i\} +   u_i   u_j \geq 0\, ,$$ 
since $  x \leq   u$. Here, we used the lattice identity $v+w=\min \{ v,w\} + \max\{v,w\}$. On the other hand, if $\min\{  x_i,  x_j\} > 0$, then $  u_i =   u_j = 1$, which implies that $  X_{ij} -   R_{ij} -   R_{ji} +   U_{ij} =   X_{ij} -   x_i -   x_j + 1$. For any $1 \leq i < j \leq n$, since $  x_i +   x_j \leq 1$, we clearly have $  X_{ij} -   x_i -   x_j + 1 \geq 0$ since $  X \geq 0$. Finally, if $i = j$, since $  X \succeq 0$ and $  X e =   x$, we obtain 
\[
X_{ii} - 2   x_i + 1 = ( e^i - e)\T    X ( e^i - e)  \geq 0, \quad  i = 1,\ldots,n,
\]
which completes the proof.
\end{proof}

By Lemma~\ref{comp1-R3-R3rho}, none of the solutions in $(x,X) \in \cF^{R3}$ with $x \in F_\rho$ is cut off by the projection $\cF^{R3}_\rho$. This observation gives rise to the following corollary.

\begin{corollary} \label{stqp_sparse_x}
\begin{enumerate}
    \item[(i)] For each $\rho \in \{2,\ldots,n\}$, if there exists an optimal solution $(  x,   X) \in \R^n \times \cS^n$ of \eqref{R3} such that $\|   x \|_0 \leq \rho$, then $\ell^{R3}(Q) = \ell^{R3}_\rho(Q)$. 
    \item[(ii)] We have $\cF^{R3}_n = \cF^{R3}$ and $\ell^{R3}(Q) = \ell^{R3}_n(Q)$.
\end{enumerate} 
\end{corollary}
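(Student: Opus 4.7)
The proof plan is to derive both parts as essentially immediate consequences of Lemma~\ref{comp1-R3-R3rho} combined with the containment $\cF^{R3}_\rho \subseteq \cF^{R3}$ that follows from~\eqref{proj_rels_2b}. The conceptual picture is that Lemma~\ref{comp1-R3-R3rho} already tells us that no $(x,X) \in \cF^{R3}$ whose first component is $\rho$-sparse is lost in passing to $\cF^{R3}_\rho$; so whenever the outer relaxation's optimum is achieved at such a sparse point, the two problems must yield the same value.

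For part (i), I would fix $\rho \in \{2,\ldots,n\}$ and let $(\bar x, \bar X)$ be an optimal solution of \eqref{R3} with $\|\bar x\|_0 \leq \rho$. Since $\bar x \in F$ and $\|\bar x\|_0 \leq \rho$, we have $\bar x \in F_\rho$, so by Lemma~\ref{comp1-R3-R3rho} the pair $(\bar x, \bar X)$ lies in $\cF^{R3}_\rho$. In view of~\eqref{ell_rho_R3_alt}, this yields $\ell^{R3}_\rho(Q) \leq \langle Q, \bar X \rangle = \ell^{R3}(Q)$. The reverse inequality $\ell^{R3}(Q) \leq \ell^{R3}_\rho(Q)$ is immediate from $\cF^{R3}_\rho \subseteq \cF^{R3}$, as established in~\eqref{proj_rels_2b}. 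The two inequalities together give the stated equality.

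For part (ii), I would simply specialize to $\rho = n$. Every $x$ with $e\T x = 1$ and $x \geq 0$ trivially satisfies $\|x\|_0 \leq n$, so the hypothesis ``$x \in F_\rho$'' in Lemma~\ref{comp1-R3-R3rho} is automatic for each $(x,X) \in \cF^{R3}$. Therefore the inclusion $\cF^{R3} \subseteq \cF^{R3}_n$ holds, and combined with the reverse inclusion from~\eqref{proj_rels_2b} we obtain $\cF^{R3}_n = \cF^{R3}$. The equality $\ell^{R3}(Q) = \ell^{R3}_n(Q)$ then follows at once from~\eqref{ell_rho_R3_alt}.

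I do not foresee any real obstacle here: all of the work has been absorbed into Lemma~\ref{comp1-R3-R3rho}, where the technical lifting $(u, R, U) = (u, x u\T, u u\T)$ (with a suitable binary $u$ extending the support of $x$) was verified to satisfy every RLT and Schur-complement constraint of \eqref{R3rho}. The only point that deserves an explicit line in the write-up is to note that $x \in F$ with $\|x\|_0 \leq \rho$ is equivalent to $x \in F_\rho$, so that Lemma~\ref{comp1-R3-R3rho} applies verbatim.
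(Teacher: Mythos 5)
Your proof is correct and follows essentially the same route as the paper: both parts rest on Lemma~\ref{comp1-R3-R3rho} for the inclusion of $\rho$-sparse points of $\cF^{R3}$ into $\cF^{R3}_\rho$, combined with the reverse containment from~\eqref{proj_rels_2b}. The only cosmetic difference is that you derive the value equality in (ii) directly from the set equality rather than by invoking part (i), which is immaterial.
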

\begin{proof}
\begin{enumerate}
    \item[(i)] We clearly have $\ell^{R3}(Q) \leq \ell^{R3}_\rho(Q)$ by \eqref{R3}, \eqref{ell_rho_R3_alt}, and \eqref{proj_rels_2b}. The reverse inequality follows from Lemma~\ref{comp1-R3-R3rho}. 
    \item[(ii)] As $F_n=F$, the first equality follows from \eqref{proj_rels_2b} and Lemma~\ref{comp1-R3-R3rho}, and the second one from the first assertion~(i).
\end{enumerate}
\end{proof}

By Corollary~\ref{stqp_sparse_x}, we can identify a particular set of instances of \eqref{sstqp} that admit an exact SDP-RLT relaxation.

\begin{corollary} \label{exact-sdp-rlt-1}
Let $\rho \in \{2,\ldots,n\}$. For any $  x \in F_\rho$, any $  P \succeq 0$ such that $  P   x = 0$, any $N \in \cS^n$ such that $  N \geq 0$ and $  x\T    N   x = 0$, and any $  \lambda \in \R$, if $Q =   P +   N +   \lambda E$, then the SDP-RLT relaxation \eqref{R3rho} is exact, i.e., $\ell^{R3}_\rho(Q) = \ell_\rho(Q)$.     
\end{corollary}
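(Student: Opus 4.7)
The plan is to invoke Theorem~\ref{exact_dnn_stqp} to turn the hypothesized decomposition $Q=P+N+\lambda E$ into a simultaneous piece of information about \eqref{stqp} and its SDP-RLT relaxation \eqref{R3}, and then to transfer this information from \eqref{R3} to \eqref{R3rho} via Corollary~\ref{stqp_sparse_x}(i). Concretely, the ``Furthermore'' clause of Theorem~\ref{exact_dnn_stqp} applies verbatim to $(x,P,N,\lambda)$ (the $\rho$-sparsity of $x$ is irrelevant for that theorem) and yields two things at once: $x$ is an optimal solution of \eqref{stqp} with optimal value $\ell(Q)=\lambda$, and the relaxation \eqref{R3} is exact with $\ell^{R3}(Q)=\lambda$.

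First, I would exploit the fact that $x\in F_\rho$. Since $x$ is optimal for \eqref{stqp} and $\|x\|_0\le \rho$, the problem \eqref{stqp} admits a $\rho$-sparse optimal solution, so the last assertion of Lemma~\ref{simple-obs1} gives $\ell_\rho(Q)=\ell(Q)=\lambda$. This pins down the value the relaxation is supposed to achieve.

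Second, I would produce an explicit optimal solution of \eqref{R3} whose $x$-component is $\rho$-sparse, to trigger Corollary~\ref{stqp_sparse_x}(i). The natural candidate is the rank-one lift $(x,xx\T)$: its membership in $\cF^{R3}$ follows immediately from $e\T x=1$, $(xx\T)e=x$, $x\ge 0$, $xx\T\ge 0$, and $xx\T\succeq xx\T$; using the decomposition together with $Px=0$, $x\T Nx=0$, and $e\T x=1$, its objective value equals $x\T Qx = 0+0+\lambda(e\T x)^2=\lambda=\ell^{R3}(Q)$, hence optimality. Because $\|x\|_0\le \rho$, Corollary~\ref{stqp_sparse_x}(i) then yields $\ell^{R3}_\rho(Q)=\ell^{R3}(Q)=\lambda$.

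Chaining the two conclusions gives $\ell^{R3}_\rho(Q)=\lambda=\ell_\rho(Q)$, which is the desired exactness. There is no real obstacle here: the proof is essentially a bookkeeping exercise that routes the decomposition hypothesis through Theorem~\ref{exact_dnn_stqp} and then through the ``sparse-optimal-$x$'' mechanism of Corollary~\ref{stqp_sparse_x}(i). The only point that deserves a brief sanity check is the verification that the rank-one lift $(x,xx\T)$ lies in $\cF^{R3}$ and attains the value $\lambda$, since this is what licenses the appeal to Corollary~\ref{stqp_sparse_x}(i).
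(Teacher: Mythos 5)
Your proposal is correct and follows essentially the same route as the paper: Theorem~\ref{exact_dnn_stqp} gives $\ell^{R3}(Q)=\ell(Q)=\lambda$ with $x$ optimal for \eqref{stqp}, Lemma~\ref{simple-obs1} gives $\ell_\rho(Q)=\ell(Q)$, and Corollary~\ref{stqp_sparse_x}(i) transfers exactness to \eqref{R3rho}. Your explicit check that the rank-one lift $(x,xx\T)$ is an \eqref{R3}-optimal solution with $\rho$-sparse first component is a detail the paper leaves implicit, but it is exactly what is needed to invoke Corollary~\ref{stqp_sparse_x}(i), so including it is a welcome clarification rather than a deviation.
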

\begin{proof}
Under the hypotheses, Theorem~\ref{exact_dnn_stqp} implies that $  x \in F$ is an optimal solution of (\ref{stqp}) and $\ell^{R3}(Q) = \ell(Q) =   \lambda$. The assertion follows from Corollary~\ref{stqp_sparse_x}(i) and Lemma~\ref{simple-obs1}.  
\end{proof}

\subsection{Rank-One Elements of $\cF^{R3}_\rho$}

Recall that each solution $(x,X) \in \cF^{R3}$, where $x \in F_\rho$, is retained in the projection $\cF^{R3}_\rho,~\rho = 1,\ldots,n$ by Lemma~\ref{comp1-R3-R3rho}. In this section, our goal is to shed light on the relations between $\cF^{R3}_\rho$ and the set of solutions $(x,X) \in \cF^{R3}$, where $\|x\|_0  > \rho$.

First, it follows from Proposition~\ref{inner-rank-one} and Lemma~\ref{F_R3_1_desc} that 
\begin{equation} \label{proj_rels_6}
\cF^{R3}_1 \subseteq \cF^{R3}_\rho \quad\mbox{for all } \rho \in \{ 2,\ldots,n\}\, ,    
\end{equation}
which, in turn, implies that $(x,X) = (\frac{1}{n} e, \frac{1}{n} I) \in \cF^{R3}_\rho$ for each $\rho \in \{1,\ldots,n\}$ by Lemma~\ref{F_R3_1_desc}. Therefore, for each $\rho \in \{1,\ldots,n\}$, there exists $(x,X) \in \cF^{R3}_\rho$ such that $\|x\|_0  > \rho$. 

Let us
restrict our attention to the subset of ``rank-one solutions'' $(x,X) \in \cF^{R3}$, i.e., those with $\|x\|_0 = \nu > \rho$ and $X = x x\T $. Note that $\langle Q, X \rangle = x\T  Q x$ for each rank-one solution. This, in turn, enables us to compare $\ell^{R3}_\rho(Q)$ and $\ell_{\nu}(Q)$ for some $\nu > \rho$.

We start with the following result for $\rho = 1$.

\begin{corollary} \label{rank-one-rho-1}
$( x,  x  x\T ) \in \cF^{R3}_1$ if and only if $x \in F_1$. 
\end{corollary}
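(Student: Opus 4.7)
The plan is to invoke Lemma~\ref{F_R3_1_desc} directly, which already gives the exact description
$$\cF^{R3}_1 = \left\{(x, X) \in \R^n \times \cS^n : e\T x = 1,\ X = \textrm{Diag}(x),\ x \geq 0 \right\}.$$
With this in hand, both implications reduce to checking when $xx\T = \textrm{Diag}(x)$ under the simplex constraints.

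For the ``if'' direction, assume $x \in F_1$, so $x = e^j$ for some $j \in \{1,\ldots,n\}$. Then $xx\T = e^j (e^j)\T = \textrm{Diag}(e^j) = \textrm{Diag}(x)$, and $e\T x = 1$, $x \geq 0$ hold trivially, so $(x, xx\T) \in \cF^{R3}_1$.

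For the ``only if'' direction, suppose $(x, xx\T) \in \cF^{R3}_1$. By Lemma~\ref{F_R3_1_desc}, the matrix component must satisfy $xx\T = \textrm{Diag}(x)$. Comparing off-diagonal entries forces $x_i x_j = 0$ for all $i \neq j$, so at most one coordinate of $x$ is nonzero; comparing diagonal entries gives $x_i^2 = x_i$, i.e. $x_i \in \{0,1\}$. Combined with $e\T x = 1$ and $x \geq 0$, exactly one coordinate equals $1$, so $x = e^j$ for some $j$, i.e. $x \in F_1$.

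There is essentially no obstacle, since the heavy lifting (showing that the semidefinite constraint is redundant for $\rho = 1$, and that the projection coincides with the convex hull of the $\{(e^j, e^j(e^j)\T)\}$) has already been carried out in Lemma~\ref{psd-condition} and Lemma~\ref{F_R3_1_desc}. The corollary is merely a rank-one specialization that characterizes when a rank-one lift lies in the relaxation's feasible set.
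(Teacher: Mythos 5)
Your proof is correct and takes essentially the same route as the paper, which simply cites Lemma~\ref{F_R3_1_desc}; you have merely spelled out the elementary verification that $xx\T = \textrm{Diag}(x)$ together with $e\T x = 1$, $x \geq 0$ forces $x$ to be a unit vector, and conversely.
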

\begin{proof}
The claim follows from Lemma~\ref{F_R3_1_desc}.
\end{proof}

By Corollary~\ref{rank-one-rho-1}, each rank-one solution $( x,  x  x\T ) \in \cF^{R3}$, where $\|  x \|_0 > 1$, is cut off by $\cF^{R3}_1$. We next focus on $\cF^{R3}_\rho$ for $\rho \geq 2$. To that end, we first state a technical result about the feasible region of (R3($\rho$)).

\begin{lemma} \label{u_lower_bound}
Let $( x,  u,  X,  U,  R) \in \R^n \times \R^n \times \cS^n \times \cS^n \times \R^{n\times n}$ be (R3($\rho$))-feasible, where $\rho \in \{1,\ldots,n\}$. Then,  
\begin{equation} \label{u_lb_1}
\left(\rho - 2\right)  u_i + 2  R_{ii} + (1 - \rho)  x_i -  X_{ii} \geq 0, \quad \mbox{for all }i \in \{1,\ldots,n\}\, .    
\end{equation}
\end{lemma}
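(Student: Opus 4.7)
The plan is to derive the inequality by summing the componentwise nonnegativity constraint $X - R - R^{\top} + U \geq 0$ over all off-diagonal entries in row $i$, and then use the linear row-sum constraints of \eqref{R3rho} to simplify.

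More precisely, fix $i \in \{1,\ldots,n\}$. The constraint $X - R^{\top} - R + U \geq 0$ yields $X_{ij} - R_{ij} - R_{ji} + U_{ij} \geq 0$ for every $j \in \{1,\ldots,n\}$. Summing this inequality over $j \neq i$, I obtain
\[
\sum_{j \neq i} X_{ij} \;-\; \sum_{j \neq i} R_{ij} \;-\; \sum_{j \neq i} R_{ji} \;+\; \sum_{j \neq i} U_{ij} \;\geq\; 0.
\]
Each of these four sums can now be evaluated in closed form using the linear constraints. Namely, $Xe = x$ gives $\sum_{j \neq i} X_{ij} = x_i - X_{ii}$; $Re = \rho x$ gives $\sum_{j \neq i} R_{ij} = \rho x_i - R_{ii}$; $R^{\top} e = u$ gives $\sum_{j \neq i} R_{ji} = u_i - R_{ii}$; and $Ue = \rho u$ combined with $\textrm{diag}(U) = u$ gives $\sum_{j \neq i} U_{ij} = \rho u_i - U_{ii} = \rho u_i - u_i = (\rho-1) u_i$.

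Substituting these identities into the summed inequality, the $R_{ii}$ terms combine to $+2R_{ii}$, the $x_i$ terms combine to $(1-\rho)x_i$, and the $u_i$ terms combine to $-u_i + (\rho-1) u_i = (\rho-2) u_i$, producing exactly
\[
(1-\rho)\, x_i \;+\; 2 R_{ii} \;+\; (\rho - 2)\, u_i \;-\; X_{ii} \;\geq\; 0,
\]
which is \eqref{u_lb_1} after rearrangement. There is no real obstacle here — the only thing to keep straight is that the sum is taken over $j \neq i$ (not all $j$), since the all-$j$ sum only recovers the weaker relation $(\rho-1)(u_i - x_i) \geq 0$; excluding the diagonal term is exactly what introduces the nontrivial combination $2R_{ii} - X_{ii}$ that makes the bound useful. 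Note also that none of the semidefiniteness or sign constraints on $X, R, U$ are needed — the proof uses only the RLT-type componentwise inequality and the equality constraints of \eqref{R3rho}, so the same bound in fact already holds in $\cF^{R1}_{\rho}$.
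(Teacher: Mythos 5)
Your proof is correct and is essentially identical to the paper's: the paper also fixes $i$, sums the constraint $U_{ij} - R_{ij} - R_{ji} + X_{ij} \geq 0$ over $j \neq i$, and evaluates the four partial row sums using $Xe = x$, $Re = \rho x$, $R\T e = u$, $Ue = \rho u$, and $\mathrm{diag}(U) = u$. Your closing remark that the bound already holds for the RLT relaxation alone is accurate, though not noted in the paper.
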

\begin{proof}
Suppose that $( x,  u,  X,  U,  R) \in \R^n \times \R^n \times \cS^n \times \cS^n \times \R^{n\times n}$ is  (R3($\rho$))-feasible. Let us fix $i \in \{1,\ldots,n\}$. For each $j \in \{1,\ldots,n\}$ such that $j \neq i$, we have
\[
 U_{ij} -  R_{ij} -  R_{ji} +  X_{ij} \geq 0.
\]
Therefore,
\begin{eqnarray*}
    0 & \leq & \sum\limits_{j \in \{1,\ldots,n\} \backslash \{i\}} \left( U_{ij} -  R_{ij} -  R_{ji} +  X_{ij}\right) \\
    & = & \left(\rho  u_i -  u_i \right) - \left( \rho  x_i -  R_{ii} \right) - \left(  u_i -  R_{ii}\right) + \left( x_i -  X_{ii}\right)\\
    & = & \left(\rho - 2\right)  u_i + 2  R_{ii} + (1 - \rho)  x_i -  X_{ii}, 
\end{eqnarray*}
where we used $\textrm{diag}( U) =  u$, $ X e =  x$, $  R\T  e =  u$, $ R e = \rho \,  x$, and $ U e = \rho \,  u$ in the second line. The assertion follows.   
\end{proof}

Using this technical result, we can establish the following result about rank-one solutions for $\rho = 2$.

\begin{corollary} \label{ub-rankone-rho-2}
For each $ x \in F$ such that $\| x\|_0 \geq 4$, we have $( x,  x  x\T ) \not \in \cF^{R3}_2$. 
\end{corollary}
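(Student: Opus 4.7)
\medskip

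\noindent\textbf{Proof plan for Corollary~\ref{ub-rankone-rho-2}.}
The plan is to argue by contradiction. Suppose there exists $x\in F$ with $\|x\|_0\ge 4$ such that $(x,xx^\top)\in \cF^{R3}_2$. By definition of the projection, we may pick $(u,U,R)\in\R^n\times\cS^n\times\R^{n\times n}$ so that $(x,u,xx^\top,U,R)$ is \eqref{R3rho}-feasible with $\rho=2$. The two ingredients I would combine are: (a)~the inequality of Lemma~\ref{u_lower_bound} with $\rho=2$, which, using $X_{ii}=x_i^2$, collapses to $2R_{ii}\ge x_i+x_i^2$; and (b)~the semidefiniteness constraint, which I will use to pin down each diagonal entry of $R$ exactly.

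The key step is to extract the identity $R_{ii}=x_iu_i$ for every $i$. For this, I would look at the $3\times 3$ principal submatrix of the big PSD matrix indexed by the rows/columns $\{1,\,1{+}i,\,n{+}1{+}i\}$, namely
\[
M_i \;=\; \begin{bmatrix} 1 & x_i & u_i \\ x_i & x_i^2 & R_{ii} \\ u_i & R_{ii} & u_i\end{bmatrix}\!,
\]
where I have used $X_{ii}=x_i^2$ and $U_{ii}=u_i$ coming from $X=xx^\top$ and $\textrm{diag}(U)=u$. Expanding the determinant along the first row gives $\det M_i = -(R_{ii}-x_iu_i)^2$. Since $M_i\succeq 0$ forces $\det M_i\ge 0$, we conclude $R_{ii}=x_iu_i$ for every $i$. (This is essentially the rank-one/Schur-complement mechanism: once the leading $2\times 2$ block $\begin{bmatrix}1&x_i\\x_i&x_i^2\end{bmatrix}$ has rank one, the remaining column must lie in its range.)

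Combining the two ingredients, for every $i$ with $x_i>0$ I would divide the inequality $2x_iu_i=2R_{ii}\ge x_i+x_i^2$ by $x_i$ to obtain $u_i\ge \tfrac12(1+x_i)$. Summing over the support $S=\{i:x_i>0\}$, and using $u\ge 0$ together with $e^\top x = 1$,
\[
2 \;=\; e^\top u \;\ge\; \sum_{i\in S} u_i \;\ge\; \frac12\sum_{i\in S}(1+x_i) \;=\; \frac{|S|+1}{2} \;\ge\; \frac{4+1}{2} \;=\; \frac52,
\]
a contradiction. Hence no such $(u,U,R)$ exists, and $(x,xx^\top)\notin \cF^{R3}_2$, as required.

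I do not expect a serious obstacle: once one spots that the $3\times 3$ principal minor identifies $R_{ii}$ exactly, everything else is bookkeeping. The only subtle point is remembering that Lemma~\ref{u_lower_bound} alone (without the $R_{ii}=x_iu_i$ equality) yields only $u_i\ge R_{ii}\ge \tfrac12(x_i+x_i^2)$, whose sum is at most $1$ and therefore insufficient to produce a contradiction; the PSD-induced equality is what lets the sparsity hypothesis $\|x\|_0\ge 4$ enter the estimate.
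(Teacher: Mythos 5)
Your proposal is correct and follows essentially the same route as the paper: both derive $u_i \ge \tfrac12(1+x_i)$ on the support by combining Lemma~\ref{u_lower_bound} (with $\rho=2$) with the fact that the semidefiniteness constraint forces $R_{ii}=x_iu_i$ when $X=xx\T$, then sum over the support to contradict $e\T u=2$. The only cosmetic difference is that the paper extracts the full identity $R=xu\T$ from the Schur complement, whereas you obtain the needed diagonal identity from a $3\times3$ principal minor.
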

\begin{proof}
We prove the contrapositive. Let $\rho = 2$ and let $( x,  x  x\T ) \in \cF^{R3}_\rho$. Then, there exists $( u,  U,  R) \in \R^n \times \cS^n \times \R^{n \times n}$ such that $( x,  u,  X,  U,  R) \in \R^n \times \R^n \times \cS^n \times \cS^n \times \R^{n\times n}$ is (R3($\rho$))-feasible, where $ X =  x  x\T $. Since $ X =  x  x\T $, it follows from the positive semidefiniteness constraint that $ R =  x  u\T $. By Lemma~\ref{u_lower_bound}, we obtain
\[
\left(\rho - 2\right)  u_i + 2  x_i  u_i + (1 - \rho)  x_i -  x_i^2 \geq 0 \quad\mbox{for all } i \in \{ 1,\ldots,n\}\, .
\]
Using $\rho = 2$, for each $i \in \{1,\ldots,n\}$ such that $ x_i > 0$, we obtain
\[
 u_i \geq \frac{1 +  x_i}{2}\, .
\]
Summing over each $i \in \{1,\ldots,n\}$ such that $ x_i > 0$, and observing $\sum\limits_{i:x_i>0} x_i= e\T x= 1$, we arrive at
\[
2 = \sum_i u_i \ge\sum_{i: x_i>0} u_i \geq \frac{\| x\|_0 + 1}{2}\, ,
\]
which implies that $\| x \|_0 \leq 3$. The assertion follows.
\end{proof}

By Corollary~\ref{ub-rankone-rho-2}, each rank-one solution $( x,  x  x\T ) \in \cF^{R3}$, where $\|  x \|_0 > 3$, is cut off by $\cF^{R3}_2$. Furthermore, for each $ x \in F$ such that $\| x\|_0 = 3$, the proof of Corollary~\ref{ub-rankone-rho-2} implies that there exists a unique $ u \in \R^n$ given by $ u = \frac{1}{2}( x + e) =  x + \frac{1}{2}(e -  x)$ such that $( x,  u,  x  x\T ,  U,  R) \in \R^n \times \R^n \times \cS^n \times \cS^n \times \R^{n\times n}$ is (R3($\rho$))-feasible. Our next result establishes that the choice of $ u$ can be generalized to larger values of $\rho$.

\begin{theorem} \label{general-construct}
We have
\begin{eqnarray} \label{incls1-3}
\left\{(x, x x\T ) \in \cF^{R3}: x \in F_{2 \rho - 1}\right\} & \subseteq & \cF^{R3}_\rho \quad \mbox{for all }\rho\in \left\{ 2,\ldots,\left \lfloor \textstyle{\frac{n+1}{2}} \right \rfloor\right\}\, , \label{incl1} \\
\left\{(x, x x\T ) \in \cF^{R3}: x \in F\right\} & \subseteq & \cF^{R3}_\rho \quad \mbox{for all }\rho \in \left\{  \left \lfloor \textstyle{\frac{n+1}{2}} \right \rfloor + 1, \ldots,n\right\}\, , \label{incl2} \\
\left\{(x, x x\T ) \in \cF^{R3}: x \in G_\rho \right\} & \subseteq & \cF^{R3}_\rho \quad \mbox{for all }\rho \in \left\{ 2, \ldots, \left \lfloor \textstyle{\frac{n}{2}} \right \rfloor\right\}\, , \label{incl3}
\end{eqnarray}
where we define for $\rho \in \left\{ 2, \ldots, \left \lfloor \textstyle{\frac{n}{2}} \right \rfloor\right\}$
\begin{equation} \label{suff_cond_set1}
G_\rho := \left\{x \in F: \|x\|_0 > 2 \rho - 1, \quad \max\limits_{1 \leq i < j \leq n: x_i x_j > 0} \frac{x_i x_j}{1 - x_i - x_j} \leq \frac{(\rho - 1)(\rho - 2)}{\left(\|x\|_0 - 2\right) \left(\|x\|_0 - 2 \rho + 1\right)}\right\}\, .
\end{equation}
\end{theorem}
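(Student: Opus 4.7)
The strategy is to construct explicit feasible witnesses $(u, U, R)$ for \eqref{R3rho} corresponding to each rank-one point $(x, xx^\top)$ in the given sets. Since Lemma~\ref{comp1-R3-R3rho} already covers $\nu := \|x\|_0 \leq \rho$, one may assume $\nu \geq \rho + 1 \geq 3$. The key structural observation is that when $X = xx^\top$ is rank-one, the block PSD constraint of \eqref{R3rho} forces $R = xu^\top$ by a standard Schur-complement/range-space argument. This identification automatically yields $R^\top e = u$, $Re = \rho x$, $R \geq 0$, and reduces the PSD constraint to $U \succeq uu^\top$.

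The construction is uniform across all three inclusions. Write $S = \mathrm{supp}(x)$ with indicator $\chi_S$, and define
\[
u \;=\; \frac{\rho-1}{\nu-1}\,\chi_S + \frac{\nu-\rho}{\nu-1}\,x, \qquad R = x u^\top,
\]
with $U_{ii} = u_i$ for $i \in S$,
\[
U_{ij} \;=\; \frac{(\rho-1)(\rho-2) + (\rho-1)(\nu-\rho)(x_i+x_j)}{(\nu-1)(\nu-2)} \qquad (i \neq j,\ i,j \in S),
\]
and $U_{ij} = 0$ whenever $i \notin S$ or $j \notin S$. Routine algebra using $e^\top x = 1$ verifies the linear portion of \eqref{R3rho}: $e^\top u = \rho$, $0 \leq x \leq u \leq e$ (the upper bound uses $\nu \geq \rho$), $\textrm{diag}(U) = u$, $Ue = \rho u$, $R^\top e = u$, $Re = \rho x$, together with $U \geq 0$, $R \geq 0$, and $X - R^\top \leq 0$. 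For the PSD constraint $U \succeq uu^\top$, set $V := U - uu^\top$: since $Ue = \rho u$ and $e^\top u = \rho$ one has $Ve = 0$; $V$ vanishes off $S \times S$ by construction; and direct simplification gives $V_{ij} \leq 0$ for $i \neq j \in S$. These three properties identify $V$ as a weighted graph Laplacian on $S$, hence $V \succeq 0$.

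The heart of the argument, and its main obstacle, is the componentwise inequality $X - R - R^\top + U \geq 0$. The diagonal $i = j$ case factors to $(1-x_i)\bigl[(\rho-1) + (\nu - 2\rho + 1)\,x_i\bigr] \geq 0$, which holds on $x_i \in [0,1]$ whenever $\nu \geq \rho$. For $i \neq j \in S$, clearing denominators collapses the inequality to the single condition
\[
(\nu - 2\rho + 1)(\nu - 2)\, x_i x_j \;\leq\; (\rho - 1)(\rho - 2)(1 - x_i - x_j).
\]
This is exactly the algebraic fact that produces the threshold $2\rho - 1$ and the definition of $G_\rho$. When $\nu \leq 2\rho - 1$, the left-hand side is nonpositive while the right-hand side is nonnegative, so the inequality holds automatically; this gives \eqref{incl1}, and \eqref{incl2} follows because $\rho \geq \lfloor (n+1)/2\rfloor + 1$ forces $2\rho - 1 \geq n$, whence $F_{2\rho-1} = F$. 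When $\nu \geq 2\rho$ (so $\rho \leq \lfloor n/2\rfloor$), one uses $\nu \geq 3$ to get $x_i + x_j < 1$ and divides by the positive quantity $(\nu - 2\rho + 1)(\nu - 2)(1 - x_i - x_j)$; the resulting bound is exactly the inequality defining $G_\rho$ in \eqref{suff_cond_set1}, proving \eqref{incl3}. Finally, the remaining constraint $R - U \leq 0$ is implied by the verified $X - R - R^\top + U \geq 0$ together with $x \leq u$ and $x \geq 0$.
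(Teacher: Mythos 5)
Your proposal is correct and follows essentially the same construction as the paper's proof: the same witness $u=\frac{\nu-\rho}{\nu-1}x+\frac{\rho-1}{\nu-1}\chi_S$ and $R=xu^\top$, and the same matrix $U$ (you write it in the affine form $U_{ij}=c(x_i+x_j)+d$ that the paper derives in Observation~\ref{obs}, while the paper packages it as $uu^\top+\alpha(\textrm{Diag}(x)-xx^\top)+\beta(\textrm{Diag}(a)-aa^\top)$), leading to the identical decisive inequality $(\nu-2\rho+1)(\nu-2)\,x_ix_j\le(\rho-1)(\rho-2)(1-x_i-x_j)$. The only local difference is that you certify $U-uu^\top\succeq 0$ via a diagonal-dominance/Laplacian argument, whereas the paper reads it off from its explicit PSD decomposition using Lemma~\ref{psd-condition}; both are valid.
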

\begin{proof}
By Corollary~\ref{stqp_sparse_x}(ii), we have $\cF^{R3}_n = \cF^{R3}$, which implies \eqref{incl2} for $\rho = n$. Therefore, let $\rho \in \{2,\ldots,n-1\}$. By Lemma~\ref{comp1-R3-R3rho}, it suffices to 
focus on rank-one solutions $(x, x x\T )$, where $x\in F$ with $\| x\|_0 \geq \rho + 1$. We abbreviate $\nu:=\| x\|_0 $ to ease notation. Our proof is constructive.  Let us define $  u \in \R^n$ as follows:
\[
  u_i = \begin{cases}   x_i + \lambda (1 -   x_i)\, , 
 & \textrm{if}~  x_i > 0\, , \\
 0\, , & \textrm{otherwise,}
\end{cases}
\]
where 
\begin{equation} \label{def_lambda}
\lambda := \frac{\rho - 1}{\nu  - 1} \in (0,1).
\end{equation}
Note that $0 \leq   x \leq   u \leq e$ and $e\T    u = \rho$. Let us define $  X =   x   x\T $, $  R =   x   u\T $, and 
\[
  U =   u   u\T  + U^1 + U^2,
\]
where 
\begin{eqnarray*}
U^1 & := & \alpha\left(\textrm{Diag}(  x) -   x   x\T \right),\\
U^2 & := & \beta \left(\textrm{Diag}(a) - a a\T \right),
\end{eqnarray*}
and $\alpha$, $\beta$, and $a \in \R^n$ are given by
\begin{eqnarray} \label{def_params}
\alpha & := & \frac{(\nu  - \rho)(\nu  - \rho - 1)}{(\nu  - 1)(\nu  - 2)} \geq 0, \label{def_alpha} \\
\beta & := & \frac{(\nu  - \rho)(\rho - 1)}{\nu  - 2} > 0, \label{def_beta} \\
a_i & := &\begin{cases}
 \frac{1 -   x_i}{\nu  - 1}, & \textrm{if}~  x_i > 0, \\
 0, & \textrm{otherwise.}
 \end{cases} \label{def_a}
\end{eqnarray}
Note that $a \in \R^n_+$, $e\T  a = 1$, and $  U e = \rho   u$, because $U^1 e =U^2 e=0$. Furthermore,
\[
\begin{bmatrix}
                              X   &   R  \\
                              R\T  &   U
                            \end{bmatrix} - \begin{bmatrix}   x \\   u \end{bmatrix} \begin{bmatrix}   x \\   u \end{bmatrix}\T  = \begin{bmatrix}
                            0 & 0  \\
                            0 & U^1 + U^2
                            \end{bmatrix} \succeq 0,
\] 
where we used Lemma~\ref{psd-condition}. In addition, if $  x_i = 0$, then $  U_{ii} = 0 =   u_i$. If $  x_i > 0$, then it follows as well that $U_{ii}=u_i$, along the following lines:
 \begin{equation}\label{crux}\begin{array}{rcl}
  U_{ii} & = &   u_i^2 + \alpha (  x_i -   x_i^2) + \beta (a_i - a_i^2) 
\\
 & = & \left((1 - \lambda)   x_i + \lambda \right)^2 + \alpha (  x_i -   x_i^2) +  \frac{\beta}{\nu  - 1}  (1 -   x_i) -  \frac{\beta}{(\nu  - 1)^2}  (1 -   x_i)^2\\
 & = & \left((1 - \lambda)^2 - \alpha - \frac{\beta}{(\nu  - 1)^2}\right)   x_i^2 + \left(2 \lambda (1 - \lambda) + \alpha - \frac{\beta}{\nu  - 1} +  \frac{2 \beta}{(\nu  - 1)^2}\right)   x_i \\
 &  & \quad + \lambda^2 + \frac{\beta}{\nu  - 1} - \frac{\beta}{(\nu  - 1)^2} \, .\end{array}
 \end{equation}
We claim the last expression of~\eqref{crux} equals $(1 - \lambda)   x_i + \lambda = u_i$, which follows by equating the coefficients of $x_i^2$, $x_i$, and $1$, in above expression and re-arranging all terms with $\lambda$ to the right-hand side: 
 \begin{equation}\label{crux2}\left\{ .\begin{array}{rcccl}
 \alpha + \frac{\beta}{(\nu  - 1)^2} & = & (1 - \lambda)^2 & = & \frac{(\nu - \rho)^2}{(\nu - 1)^2}\\
 \alpha + \frac{\beta (3-\nu)}{(\nu  - 1)^2} & = &(1 - \lambda) (1- 2 \lambda) & = & \frac{(\nu - \rho)(\nu - 2 \rho + 1)}{(\nu - 1)^2} \\
 0 +  \frac{\beta (\nu-2)}{(\nu  - 1)^2} &=& \lambda (1-\lambda) & = & \frac{(\nu - \rho)(\rho - 1)}{(\nu  - 1)^2}
\end{array}
\right.\, . \end{equation}
Observe that the system \eqref{crux2} has a unique solution given by \eqref{def_alpha} and \eqref{def_beta} since subtracting the second equation from the first one yields the third equation. Therefore, we obtain that $\textrm{diag}(  U) =   u$. We clearly have $  X \geq 0$, $  R \geq 0$, and $  X -   R\T  =  (  x -   u)   x\T  \leq 0$. Finally, we focus on $  X -   R -   R\T  +   U \geq 0$ since each of $R - U \leq 0$ and $U \geq 0$ is implied by these constraints. If $  x_i = 0$, then $  U_{ii} - 2   R_{ii} +   X_{ii} = 0 \geq 0$. On the other hand, if $  x_i > 0$, we have
\[
  U_{ii} - 2   R_{ii} +   X_{ii} =    u_i^2 + U^1_{ii} + U^2_{ii} - 2   x_i   u_i +   x_i^2 = (  u_i -   x_i)^2 + U^1_{ii} + U^2_{ii} \geq 0,
\]
where we used $U^1 \succeq 0$ and $U^2 \succeq 0$. 
Similarly, $  U_{ij} -   R_{ij} -   R_{ji} +   X_{ij} = 0 \geq 0$ whenever $1 \leq i < j \leq n$ and $  x_i   x_j = 0$. On the other hand, if $1 \leq i < j \leq n$ and $  x_i   x_j > 0$, we obtain
\begin{eqnarray*}
  U_{ij} -   R_{ij} -   R_{ji} +   X_{ij} & = &   u_i   u_j - \alpha   x_i   x_j - \beta a_i a_j -   x_i   u_j -   x_j   u_i +   x_i   x_j \\
 & = & (  u_i -   x_i)(  u_j -   x_j) - \alpha   x_i   x_j - \beta a_i a_j \\
 & = & \lambda^2 (1 -   x_i) (1 -   x_j) - \alpha   x_i   x_j - {\textstyle\frac{\beta}{(\nu  - 1)^2}}\, (1 -   x_i) (1 -   x_j) \\
 & = & \left(\lambda^2 - {\textstyle\frac{\beta}{(\nu  - 1)^2}}\right) (1 -   x_i -   x_j) + \left(\lambda^2 - {\textstyle\frac{\beta}{(\nu  - 1)^2}} - \alpha\right)   x_i   x_j\\
 & = & {\textstyle\frac{(\rho - 1)(\rho - 2)}{(\nu  - 1)(\nu  - 2)}}\left(1 -   x_i -   x_j\right)  
 +  {\textstyle\frac{   2 \rho - 1 - \nu  }{\nu  - 1}} \,   x_i   x_j
 \, ,
\end{eqnarray*}
where we used \eqref{def_lambda}, \eqref{def_alpha}, and \eqref{def_beta} to derive the last equation. 
Since $\rho \geq 2$, $\nu \geq 3$, $1 -   x_i -   x_j\ge 0$ and $  x_i   x_j > 0$, it follows that $  U_{ij} -   R_{ij} -   R_{ji} +   X_{ij} \geq 0$ if $\nu  \leq 2 \rho - 1$, which establishes \eqref{incl1} and \eqref{incl2}. If, on the other hand, $\nu  > 2 \rho - 1$, then $  U_{ij} -   R_{ij} -   R_{ji} +   X_{ij} \geq 0$ by \eqref{suff_cond_set1}, giving rise to \eqref{incl3}. This completes the proof.
\end{proof}   

Before we proceed to the important consequences of the above result, let us motivate the construction in its proof, in particular the choice of $\lambda$ and the other constants.

\begin{observation}\label{obs}
  Let $\rho\in \{ 2,\ldots,n\}$ and let ${x}\in F$.  Assume that $ {u}_i = \tau {x}_i +b$ if $x_i>0$ with $0<\tau<1$, while $u_i=0$ if $x_i=0$. Furthermore, assume that $ {U}_{ij} = c {x}_i + c  {x}_j +d$ if $x_ix_j>0$ while $U_{ij}=0$ if $x_ix_j=0$ for $1 \leq i < j \leq n$. It is easy to verify that the choices of $u$ and $U$ in the proof of Theorem~\ref{general-construct} are in this form. Then, the best choice of $\tau$, $b$, $c$ and $d$ ensuring that $(x, u, X, U, R) = ( {x} ,  {u},  {x} {x}\T, {U}, {x} {u}\T)$ is R3($\rho$)-feasible, is the choice in the proof of Theorem~\ref{general-construct}.
    \end{observation}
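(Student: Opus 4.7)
The argument splits into two complementary parts, matching the two claims implicit in the observation.

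\textbf{Part 1: The proof's construction fits the ansatz.} Set $\tau := 1-\lambda$ and $b := \lambda$, where $\lambda := (\rho-1)/(\nu-1)$ and $\nu := \|x\|_0$. Then the definition $u_i = (1-\lambda) x_i + \lambda$ (for $x_i>0$) in the proof of Theorem~\ref{general-construct} becomes $u_i = \tau x_i + b$, matching the prescribed form of $u$. For $U$, expand $U_{ij} = u_i u_j - \alpha x_i x_j - \beta a_i a_j$ for $i<j$ with $x_i x_j > 0$, using $a_i = (1-x_i)/(\nu-1)$, and group the result by the monomials $1$, $x_i+x_j$ and $x_i x_j$. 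The coefficient of $x_i x_j$ is exactly $(1-\lambda)^2 - \alpha - \beta/(\nu-1)^2$, which vanishes by the first identity of~\eqref{crux2}. What remains is precisely $c(x_i+x_j)+d$ with
\[
c = \frac{(\rho-1)(\nu-\rho)}{(\nu-1)(\nu-2)}, \qquad d = \frac{(\rho-1)(\rho-2)}{(\nu-1)(\nu-2)},
\]
the values appearing in the proof of Theorem~\ref{general-construct}.

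\textbf{Part 2: Uniqueness of $\tau,b,c,d$ within the ansatz.} Substituting the ansatz together with $X=xx\T$ and $R=xu\T$ into \eqref{R3rho}, the semidefinite block reduces by Schur complement to $U - uu\T \succeq 0$, while the linear constraints $e\T u = \rho$ and $Ue = \rho u$ translate (using $\mathrm{diag}(U) = u$ and separating the coefficient of $x_i$ from the constant in row $i$) to the three linear equations
\[
\tau + b\nu = \rho, \qquad (\nu-2) c = (\rho-1)\tau, \qquad c + (\nu-1) d = (\rho-1) b.
\]
These three equations determine $(b,c,d)$ once $\tau$ is fixed, leaving one degree of freedom. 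The fourth condition arises from the interplay of the RLT inequality $x \le u$ and the PSD-plus-diagonal bound $u \le e$: the former forces $b \ge (1-\tau) x_i$ for every $i$ in the support of $x$, while the latter (which follows from $\mathrm{diag}(U) = u$ together with $U \succeq uu\T$) forces $\tau x_i + b \le 1$ for those $i$. Interpreting ``best'' as the unique choice whose R3($\rho$)-feasibility persists uniformly in $x \in F$ with $\|x\|_0 = \nu$, the extremal configuration in which one coordinate approaches $1$ squeezes $\tau + b \ge 1$ and $\tau + b \le 1$ simultaneously, yielding $\tau + b = 1$. Solving the resulting $4\times 4$ system gives $b = \lambda$, $\tau = 1-\lambda$, and then the values of $c$ and $d$ exhibited in Part~1, matching the proof of Theorem~\ref{general-construct}.

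\textbf{Main obstacle.} The first three linear equations are immediate substitutions into the feasibility constraints, but pinning down the fourth parameter requires a principled reading of ``best''. The natural one is uniform feasibility across $x$ of fixed sparsity $\nu$, which forces simultaneous saturation of $x \le u$ and $u \le e$ at extremal $x$ and collapses the remaining degree of freedom to $\tau + b = 1$. Identifying this as the right fourth condition is the key step tying the observation to the specific construction used in the proof of Theorem~\ref{general-construct}.
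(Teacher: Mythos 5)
Your proposal is correct and follows essentially the same route as the paper: the three linear equations you extract from $e\T u=\rho$, $\mathrm{diag}(U)=u$, and $Ue=\rho u$ are exactly the system the paper solves for $b$, $c$, $d$ in terms of $\tau$, and your verification in Part~1 matches the paper's construction. The only difference is how the last degree of freedom is pinned down --- the paper maximizes the admissible component bound $\min\bigl\{(1-b)/\tau,\ b/(1-\tau)\bigr\}$ over $\tau\in(0,1)$ using the monotonicity of the two branches, whereas you impose uniform feasibility over all $x$ of support size $\nu$ (a coordinate tending to $1$ forces $\tau+b=1$); these coincide because the paper's maximal bound equals exactly $1$ at the crossing point $\tau^*=(\nu-\rho)/(\nu-1)$.
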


    \begin{proof} Let $\rho\in \{ 2,\ldots,n\}$ and let ${x}\in F$. Again, abbreviate $\nu=\| {x}\|_0$.
   From $e\T (\tau {x} +b) = e\T u= \rho$, we derive $b = \frac{\rho-\tau}{\nu}\in (0,1)$ as $\rho>1>\tau$ and $\rho-\tau< \rho < \nu$. Furthermore, the constraints $x\leq u \leq e$ become
   $$x_i \le \min \left \{ {\frac{1-b}{\tau}, \frac{b}{1-\tau}}\right \}
   =  \min \left \{{\frac{ \nu-\rho+\tau}{ \nu \tau}, \frac{\rho-\tau}{ \nu(1-\tau)}}\right \}
   \quad\mbox{for all }i = 1,\ldots,n\, .$$
   Since 
  $g(\tau):=\frac{ \nu-\rho+\tau}{ \nu \tau}$  decreases and $h(\tau):=\frac{\rho-\tau}{ \nu(1-\tau)}$ increases with $\tau\in (0,1)$, the maximum of  $\min \left \{ g(\tau), h(\tau)\right \}$ is attained at $\tau^*$ satisfying $g(\tau^*)=h(\tau^*)$, and this value ensures that the formulation covers as many $ x\in F $ as possible.
  Hence the best choice of $\tau$ would be the solution $\tau^*$ of $g(\tau^*)=h(\tau^*)$, namely $\tau^*=\frac{ \nu-\rho}{ \nu-1}$,
    which is exactly our choice in the proof of Theorem~\ref{general-construct} with $\lambda = 1 - \tau^*= \frac{\rho-1}{\nu-\rho}$.   
    Since $U_{ii} = u_i$ and $Ue = \rho u$, we have for $x_i>0$
    $$\begin{aligned}
        \tau^*  x _i + b + \sum_{j\neq i:x_j>0}(c x _i + c  x _j +d) &=\rho(\tau^*  x _i + b)\quad\mbox{or} \\[0.3em]
        ( \nu -1)d + c + \frac{\rho - \tau^*}{ \nu} + ( \nu -1)c  x _i + (\tau^* - c) x _i &= \rho\tau^*  x _i + \frac{\rho(\rho - \tau^*)}{ \nu}, 
    \end{aligned}$$
    which implies, comparing coefficients of $x_i$ and 1, that 
    \[ \begin{cases}
   ( \nu -1)c + \tau^* - c = \rho \tau^*     \quad\mbox{and}\\[0.3em]
    ( \nu -1)d + c + \frac{\rho - \tau^*}{ \nu} = \frac{\rho(\rho - \tau^*)}{ \nu}     \, ,
    \end{cases}\]
   so that $c = \frac{(\rho-1)\tau^*}{ \nu-2}= \frac{(\rho-1)(\nu-\rho)}{(\nu-1)(\nu-2)}$ and $d = 
    \frac{\rho-1}{\nu(\nu-1)(\nu-2)}[(\nu-2)\rho-2\tau^*( \nu-1)] =\frac{(\rho-1)(\rho-2)}{ ( \nu-1)( \nu-2)}$,
   substituting $\tau^* = \frac{ \nu-\rho}{ \nu-1}$. This justifies our choice of $c$ and $d$ in the proof of Theorem~\ref{general-construct}. 
\end{proof}

 \begin{example}\label{exle} The condition~\eqref{suff_cond_set1} is sufficient but not necessary. For $n = 6$ and $\rho = 3$, the point 
 \[ 
 x = [0.6,0.2,0.05,0.05,0.05,0.05]\T \in F 
 \]
 violates~\eqref{suff_cond_set1} since 
    $$0.6 =\frac{(0.6)\, (0.2)}{1 - 0.6 - 0.2} = \max\limits_{1 \leq i < j \leq n: x_i x_j > 0} \frac{x_i x_j}{1 - x_i - x_j}>\frac{(\rho - 1)(\rho - 2)}{\left(\|x\|_0 - 2\right) \left(\|x\|_0 - 2 \rho + 1\right)} = 0.5\, ,$$
    while there exists $(u, U, R) \in \R^6 \times \cS^6 \times \R^{6 \times 6}$ such that $(x, u, X, U, R) = ( {x} ,  {u},  {x} {x}\T, {U}, {x} {u}\T)$ is (SDP-RLT(3))-feasible.
    One choice of $u$ and $U$ is $u = [0.8866,0.5512,0.3905,0.3906,0.3906,0.3905]\T$ and $$U = \begin{bmatrix}
    0.8866    &0.4674    &0.3264    &0.3265    &0.3265    &0.3264\\
    0.4674    &0.5512    &0.1588    &0.1588    &0.1588    &0.1588\\
    0.3264    &0.1588    &0.3905    &0.0986    &0.0986    &0.0986\\
    0.3265    &0.1588    &0.0986    &0.3906    &0.0986    &0.0986\\
    0.3265    &0.1588    &0.0986    &0.0986    &0.3906    &0.0986\\
    0.3264    &0.1588    &0.0986    &0.0986    &0.0986    &0.3905\\
    \end{bmatrix}\, .$$
Note that $u$ is not given by an affine function of $x$ in the sense of Observation~\ref{obs}.
\end{example}
    
Theorem~\ref{general-construct} reveals that an increasingly larger and nontrivial set of rank-one solutions is contained in the sets $\cF^{R3}_\rho$ as $\rho$ increases. Note that $G_\rho$ given by \eqref{suff_cond_set1} is a nonconvex set. Our next result gives further insight into this set by providing a piecewise convex inner approximation.

\begin{lemma} \label{inner_approx_Grho}
We have $G_2 = \emptyset$ and  
\begin{equation} \label{def_Hrho}
H_\rho := \bigcup\limits_{\nu = 2 \rho}^n \left\{x \in F: \| x \|_0 = \nu, \quad x_i + x_j \leq \delta_{\rho,\nu},~1 \leq i < j \leq n\right\} \subseteq G_\rho  \quad\mbox{if } \rho \in \left\{3,\ldots,\left \lfloor \textstyle{\frac{n}{2}} \right \rfloor \right\},  
\end{equation}
where $G_\rho$ is defined as in \eqref{suff_cond_set1} and 
\begin{equation} \label{def_delta}
\delta_{\rho,\nu} := 2 \left[\left(\tau_{\rho,\nu}^2 + \tau_{\rho,\nu}\right)^{1/2} - \tau_{\rho,\nu}\right],   
\end{equation}
with
\begin{equation} \label{def_tau}
\tau_{\rho,\nu} := \frac{(\rho - 1)(\rho - 2)}{\left(\nu - 2\right) \left(\nu - 2 \rho + 1\right)}.    
\end{equation}
Furthermore,
\begin{equation} \label{incl4}
\left\{(x, x x\T ) \in \cF^{R3}: x \in H_\rho \right\} \subseteq \cF^{R3}_\rho \quad \mbox{for all }\rho \in \left\{ 3, \ldots, \left \lfloor \textstyle{\frac{n}{2}} \right \rfloor\right\}\, .
\end{equation}
\end{lemma}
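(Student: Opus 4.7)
The plan is to dispatch the three claims one at a time, each reducing to a short algebraic identity once the correct substitution is identified. First I would settle $G_2 = \emptyset$: when $\rho = 2$ the bound $(\rho-1)(\rho-2)/[(\|x\|_0-2)(\|x\|_0-2\rho+1)]$ collapses to $0$, while for any $x \in F$ with $\|x\|_0 > 3$ and any index pair $1 \leq i < j \leq n$ with $x_ix_j > 0$, the sparsity forces at least two further positive coordinates, so $1 - x_i - x_j > 0$ and hence $x_ix_j/(1-x_i-x_j) > 0$, violating the defining inequality of $G_2$.

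For the main step, $H_\rho \subseteq G_\rho$ for $\rho \in \{3,\ldots,\lfloor n/2 \rfloor\}$, I would fix $x \in H_\rho$ with $\|x\|_0 = \nu \in \{2\rho,\ldots,n\}$ and a pair $i<j$ with $x_ix_j > 0$, and write $s := x_i + x_j \leq \delta_{\rho,\nu}$. The AM-GM inequality gives $x_ix_j \leq s^2/4$, and provided $s < 1$ the map $g(s) := s^2/[4(1-s)]$ is strictly increasing on $[0,1)$. The decisive observation is that $\delta_{\rho,\nu}$ in \eqref{def_delta} was built as the positive root of the quadratic $\delta^2 + 4\tau_{\rho,\nu}\delta - 4\tau_{\rho,\nu} = 0$, equivalently $\delta^2 = 4\tau_{\rho,\nu}(1-\delta)$, so $g(\delta_{\rho,\nu}) = \tau_{\rho,\nu}$; the bound $\delta_{\rho,\nu} < 1$ itself follows from the trivial rearrangement $4(\tau_{\rho,\nu}^2+\tau_{\rho,\nu}) < (1+2\tau_{\rho,\nu})^2$. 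Chaining these facts yields
\[
\frac{x_ix_j}{1-x_i-x_j} \;\leq\; \frac{s^2}{4(1-s)} \;\leq\; \frac{\delta_{\rho,\nu}^2}{4(1-\delta_{\rho,\nu})} \;=\; \tau_{\rho,\nu},
\]
and since $\nu \geq 2\rho > 2\rho - 1$ the sparsity requirement of $G_\rho$ is also met, so $x \in G_\rho$.

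Finally, the inclusion \eqref{incl4} follows by observing that for any $x \in F$ the rank-one lift $(x, xx\T)$ automatically lies in $\cF^{R3}$ (the equalities $e\T x = 1$, $(xx\T)e = x$, the nonnegativities, and $xx\T \succeq xx\T$ are all immediate), after which $H_\rho \subseteq G_\rho$ feeds straight into \eqref{incl3} of Theorem~\ref{general-construct}. The only substantive obstacle in this plan is recognising that $\delta_{\rho,\nu}$ was engineered precisely as the positive root making the rescaled AM-GM bound $s^2/[4(1-s)]$ meet $\tau_{\rho,\nu}$; once this is noted, everything reduces to monotonicity of $g$ together with AM-GM.
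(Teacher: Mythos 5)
Your proposal is correct and follows essentially the same route as the paper's proof: the paper also notes that the bound in \eqref{suff_cond_set1} vanishes for $\rho=2$, verifies $\max_{i<j:\,x_ix_j>0}\frac{x_ix_j}{1-x_i-x_j}\leq\frac{\delta_{\rho,\nu}^2}{4(1-\delta_{\rho,\nu})}=\tau_{\rho,\nu}$ (leaving the AM--GM/monotonicity details you supply as ``easy to verify''), and then invokes \eqref{incl3} of Theorem~\ref{general-construct}. Your explicit identification of $\delta_{\rho,\nu}$ as the positive root of $\delta^2=4\tau_{\rho,\nu}(1-\delta)$ is exactly the intended mechanism.
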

\begin{proof}
For $\rho = 2$, the upper bound in \eqref{suff_cond_set1} equals zero, which implies that $G_2 = \emptyset$. Let us fix $\rho \in \left\{3,\ldots,\left \lfloor \textstyle{\frac{n}{2}} \right \rfloor \right\}$ and let $x \in H_\rho$. Then, $x \in F$, $\| x \|_0 = \nu > 2 \rho - 1$, and it is easy to verify that 
\[
\max\limits_{1 \leq i < j \leq n: x_i x_j > 0} \frac{x_i x_j}{1 - x_i - x_j} \leq \frac{\delta_{\rho,\nu}^2}{4 \left( 1 - \delta_{\rho,\nu} \right)} = \tau_{\rho,\nu} \, ,
\]
where the last equality follows from \eqref{def_delta} and \eqref{def_tau}. Both inclusions \eqref{def_Hrho} and \eqref{incl4} now follow from Theorem~\ref{general-construct} by observing that $\| x \|_0 = \nu$.
\end{proof}

For fixed $\rho \in \left\{3,\ldots,\left \lfloor \textstyle{\frac{n}{2}} \right \rfloor \right\}$, it is worth noticing that $\tau_{\rho,\nu}$ given by \eqref{def_tau} is a decreasing function of $\nu$, which, in turn, implies that $\delta_{\rho,\nu}$ given by \eqref{def_delta} is a decreasing function of $\nu$. Therefore, the positive components of the elements of $H_\rho$ given by Lemma~\ref{inner_approx_Grho} tend to get closer to each other as $\nu$ increases. For instance, if $\rho = 3$, then $\delta_{\rho,\nu}$ equals $0.7321$, $0.5798$, and $0.4805$ for $\nu = 6$, $\nu = 7$, and $\nu = 8$, respectively. Note that the point $x$ of~Example~\ref{exle} satisfies $x\notin G_3$, readily certifying $x\notin H_3$ since $x_1+x_2=0.8>0.7321$. 

Theorem~\ref{general-construct} gives rise to several results about rank-one solutions of $\cF^{R3}_\rho$. Our next result gives a complete description of such solutions for $\rho = 2$. 

\begin{corollary} \label{rank-one-rho0-3-rho-2}
 We have $(  x,   x   x\T ) \in \cF^{R3}_2$ if and only if $x \in F_3$.
\end{corollary}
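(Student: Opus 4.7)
The proof should be essentially a two-line assembly of results already in hand, so my plan is to just point to the correct prior statements.

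For the forward direction ($\Rightarrow$), I would invoke Corollary~\ref{ub-rankone-rho-2}, which is the contrapositive of what we need: if $x \in F$ with $\|x\|_0 \geq 4$, then $(x, xx^\top) \notin \cF^{R3}_2$. Hence membership of $(x, xx^\top)$ in $\cF^{R3}_2$ forces $\|x\|_0 \leq 3$, i.e., $x \in F_3$.

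For the reverse direction ($\Leftarrow$), assume $x \in F_3$. First observe that $(x, xx^\top) \in \cF^{R3}$ automatically for any $x \in F$, since $e^\top x = 1$, $xx^\top e = x$, $xx^\top \geq 0$, and $xx^\top \succeq xx^\top$. Then apply the inclusion~\eqref{incl1} of Theorem~\ref{general-construct} with $\rho = 2$, for which $2\rho - 1 = 3$, yielding
\[
\{(x, xx^\top) \in \cF^{R3} : x \in F_3\} \subseteq \cF^{R3}_2.
\]
This gives $(x, xx^\top) \in \cF^{R3}_2$.

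There is no real obstacle: the two inclusions needed for the ``iff'' are exactly the sharp upper bound from Corollary~\ref{ub-rankone-rho-2} and the sharp lower bound from~\eqref{incl1} of Theorem~\ref{general-construct}, which meet at $\|x\|_0 = 3$. The only minor point worth noting is to double-check the edge case $n \leq 3$, where $F_3 = F$: the conclusion is still consistent because, when $n = 2$, $\cF^{R3}_2 = \cF^{R3}_n = \cF^{R3}$ by Corollary~\ref{stqp_sparse_x}(ii), and when $n = 3$, $\lfloor (n+1)/2 \rfloor = 2$ so Theorem~\ref{general-construct}~\eqref{incl1} still applies and covers every $x \in F$.
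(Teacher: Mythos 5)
Your proof is correct and follows the paper's argument exactly: the forward direction is Corollary~\ref{ub-rankone-rho-2} and the reverse direction is inclusion~\eqref{incl1} of Theorem~\ref{general-construct} with $\rho=2$. The extra remarks on the edge cases $n\leq 3$ are a welcome bonus but not needed beyond what the cited results already cover.
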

\begin{proof}
By Theorem~\ref{general-construct}, for any $  x \in F_3$, we have $(  x,   x   x\T ) \in \cF^{R3}_2$ by~\eqref{incl1}. The assertion follows from Corollary~\ref{ub-rankone-rho-2}. 
\end{proof}

For $\rho = 1$ and $\rho = 2$, it follows from Corollary~\ref{rank-one-rho-1} and Corollary~\ref{rank-one-rho0-3-rho-2} that $(  x,   x   x\T ) \in \cF^{R3}_\rho$ if and only if $x \in F_1$ and $x \in F_3$, respectively. On the other hand, for $\rho \geq 3$, Theorem~\ref{general-construct} gives rise to our next result, which reveals that such a nontrivial upper bound on $\|   x \|_0$ concerning rank-one solutions of $\cF^{R3}_\rho$ does not exist.

\begin{lemma} \label{no-upper-bound}
    Let $\rho\geq 3$. Then, for any $\nu \in \{\rho + 1,\ldots,n\}$, 
    there exists $  x \in F_{\nu}$ such that $(  x,   x   x\T ) \in \cF^{R3}_\rho$. 
\end{lemma}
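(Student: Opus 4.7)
The plan is to exhibit an explicit $\nu$-sparse witness for every $\nu\in\{\rho+1,\ldots,n\}$ and then match it to whichever of the three inclusions in Theorem~\ref{general-construct} applies. The natural candidate is the uniform mass on any $\nu$-element subset $S\subseteq\{1,\ldots,n\}$, namely
\[
x \;:=\; \tfrac{1}{\nu}\sum_{i\in S} e^i \;\in\; F_\nu\, .
\]
This choice is canonical because it minimises the ratio $x_ix_j/(1-x_i-x_j)$ among all $x\in F$ with support size $\nu$, so it makes the $G_\rho$-condition as easy to check as possible.

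The argument will then split according to where $(\rho,\nu)$ sits. If $\rho>\lfloor(n+1)/2\rfloor$, inclusion~\eqref{incl2} applies to every $x\in F$ and nothing more is needed. If $\rho\le\lfloor(n+1)/2\rfloor$ and $\nu\le 2\rho-1$, then $x\in F_{2\rho-1}$ and \eqref{incl1} yields $(x,xx\T)\in\cF^{R3}_\rho$. The only genuinely new case is $3\le\rho\le\lfloor n/2\rfloor$ together with $\nu\ge 2\rho$, where I plan to verify $x\in G_\rho$ and invoke \eqref{incl3}. (The regime $\rho=\lfloor(n+1)/2\rfloor>\lfloor n/2\rfloor$ forces $n$ odd and $2\rho = n+1 > \nu$, so no further case is needed.)

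For that verification, I would substitute $x_i=1/\nu$ for $i\in S$ into the definition~\eqref{suff_cond_set1}: the maximum of $x_ix_j/(1-x_i-x_j)$ over pairs in the support equals the constant $1/[\nu(\nu-2)]$. After cancelling the common positive factor $(\nu-2)$ on both sides, the required inequality reduces to
\[
\nu - 2\rho + 1 \;\le\; \nu(\rho-1)(\rho-2)\, ,
\]
equivalently $\nu\bigl[(\rho-1)(\rho-2)-1\bigr] + 2\rho - 1 \ge 0$, which holds trivially since $(\rho-1)(\rho-2)\ge 2$ whenever $\rho\ge 3$.

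The only modest obstacle is guessing the right witness; once the uniform $x$ is identified, the rest is bookkeeping, matching $(\rho,\nu)$ to the correct inclusion of Theorem~\ref{general-construct}, and a one-line algebraic check. The lemma then follows immediately from the case analysis above.
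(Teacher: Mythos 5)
Your proposal is correct and follows essentially the same route as the paper: the same uniform witness $x_i=1/\nu$ on a $\nu$-element support, the same case split according to the inclusions of Theorem~\ref{general-construct}, and the same final inequality $\nu-2\rho+1\le\nu(\rho-1)(\rho-2)$. The only cosmetic difference is that you verify membership in $G_\rho$ directly, whereas the paper routes through the inner approximation $H_\rho$ of Lemma~\ref{inner_approx_Grho}; both reduce to the identical check $\tfrac{1}{\nu(\nu-2)}\le\tau_{\rho,\nu}$.
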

\begin{proof}
Let $\rho \geq 3$ and $\nu \geq \rho + 1$. By Theorem~\ref{general-construct}, the assertion clearly holds for any $  x \in F_{\nu}$ such that $\|   x \|_0 = \nu \leq 2 \rho - 1$. Suppose that  $\nu > 2 \rho - 1$. By Lemma~\ref{inner_approx_Grho}, it suffices to construct an $  x \in F_{\nu}$ such that $\|   x \|_0 = \nu$ and $x \in H_\rho$, where $H_\rho$ is defined as in \eqref{def_Hrho}. Let $  x \in F_{\nu}$ be given by
\[
  x_i = \begin{cases}
\frac{1}{\nu}\,, & \textrm{if } i \in\{ 1,\ldots,\nu\}\, ,\\
0\, , & \textrm{otherwise.}
\end{cases}
\]
We therefore need to verify that
\[
\frac{2}{\nu} \leq 2 \left[\left(\tau_{\rho,\nu}^2 + \tau_{\rho,\nu}\right)^{1/2} - \tau_{\rho,\nu}\right]\, ,
\]
where $\tau_{\rho,\nu}$ is given by \eqref{def_tau}. Rearranging and simplifying the terms, the above inequality reduces to
\[
\frac 1{\nu(\nu - 2)} \leq \tau_{\rho,\nu}\, .
\]
By \eqref{def_tau}, this inequality holds if 
\[
\frac{\nu - 2 \rho + 1}{\nu} \leq (\rho - 1)(\rho - 2)\, .
\]
Since $\rho \geq 3$ (and thus $2\rho-1>0$), we even have
\[
\frac{\nu - 2 \rho + 1}{\nu} \leq 1 \leq (\rho - 1)(\rho - 2)\, ,
\]
which establishes the assertion.
\end{proof}

Following our earlier discussion about the positive components of the elements of the set $H_\rho$, we remark that all such components of the solution constructed in the proof of Lemma~\ref{no-upper-bound} are equal. Our next result establishes another useful property of the rank-one solutions of $\cF^{R3}_\rho$.

\begin{theorem} \label{nested-rank-one}
For each $\rho \in \{1,\ldots,n-1\}$, if $(x, xx\T) \in \cF^{R3}_\rho$, then $(x, xx\T) \in \cF^{R3}_{\rho+1}$.   
\end{theorem}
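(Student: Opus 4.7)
The plan is to construct, from any R3($\rho$)-feasibility certificate of the rank-one point $(x, xx^\top)$, an explicit R3($\rho+1$)-feasibility certificate. Since $(x, xx^\top) \in \cF^{R3}_\rho$, there exist $(u, U, R)$ with $(x, u, xx^\top, U, R)$ being R3($\rho$)-feasible. The PSD condition on the lifting matrix combined with the rank-one structure $X = xx^\top$ forces $R = xu^\top$ (by Schur complement, exactly as in the proof of Corollary~\ref{ub-rankone-rho-2}). Thus it suffices to specify the accompanying $(u, U)$ and let $R = xu^\top$.

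Setting $\tau := 1/(n-\rho) \in (0, 1]$ (well-defined since $\rho \leq n - 1$), I would take
$u' := (1-\tau)u + \tau e$ and $R' := xu'^\top$, noting that $e^\top u' = (1-\tau)\rho + \tau n = \rho+1$ and $0 \leq x \leq u \leq u' \leq e$. For the lift $U'$, the natural candidate is
\[
U'_0 := (1-\tau)^2 U + \tau(1-\tau)\bigl(u e^\top + e u^\top\bigr) + \tau^2 ee^\top = u'u'^\top + (1-\tau)^2 (U - uu^\top).
\]
A direct computation would yield: (i) $U'_0 \geq 0$ componentwise; (ii) $U'_0 e = [(1-\tau)\rho + \tau n]\, u' = (\rho+1)u'$; (iii) $U'_0 \succeq u'u'^\top$; (iv) $\textrm{diag}(U'_0) = u' - \delta$ with $\delta := \tau(1-\tau)(e - u) \geq 0$; and, using the inequality $U_{ij} \geq x_iu_j + u_ix_j - x_ix_j$ that holds for the R3($\rho$)-feasible $U$, a somewhat tedious but elementary algebraic manipulation would yield the componentwise slack
\[
(U'_0)_{ij} - \bigl(x_i u'_j + u'_i x_j - x_ix_j\bigr) \;\geq\; \tau^2 (1-x_i)(1-x_j)\;\geq\; 0.
\]

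I would then set $U' := U'_0 + \Delta$, where $\Delta \in \cS^n$ is symmetric with $\textrm{diag}(\Delta) = \delta$, $\Delta e = 0$, and $\Delta \succeq 0$. Existence of such a $\Delta$ is equivalent to the triangle inequality $\sqrt{\delta_i} \leq \sum_{j \neq i}\sqrt{\delta_j}$, or equivalently $\sqrt{1-u_i} \leq \sum_{j \neq i}\sqrt{1-u_j}$ for all $i$. For $\rho \leq n - 2$ this follows from sub-additivity of the square root, since $\sum_{j \neq i}\sqrt{1-u_j} \geq \sqrt{\sum_{j \neq i}(1-u_j)} = \sqrt{(n-\rho) - (1-u_i)} \geq \sqrt{1-u_i}$, using $n - \rho \geq 2 \geq 2(1-u_i)$. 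The edge case $\rho = n - 1$ gives $u' = e$ and $U' := ee^\top$ directly. With such a $\Delta$, the matrix $U'$ satisfies $\textrm{diag}(U') = u'$, $U'e = (\rho+1)u'$, and $U' \succeq u'u'^\top$, so the tuple $(x, u', xx^\top, U', xu'^\top)$ is a candidate R3($\rho+1$)-certificate.

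The main obstacle will be the componentwise RLT inequality $U'_{ij} \geq x_iu'_j + u'_ix_j - x_ix_j$ in the presence of potentially negative off-diagonal entries of $\Delta$: a generic PSD $\Delta$ may saturate the Cauchy--Schwarz bound $|\Delta_{ij}| \leq \sqrt{\delta_i\delta_j}$, which can be larger in magnitude than the slack $\tau^2(1-x_i)(1-x_j)$ established above. This is overcome by a refined choice of $\Delta$: one may relax the constraint $\Delta \succeq 0$ to the weaker $\Delta + (1-\tau)^2 (U - uu^\top) \succeq 0$ (thereby exploiting the additional PSD slack from $(1-\tau)^2(U - uu^\top) \succeq 0$, which moreover satisfies $[(1-\tau)^2(U - uu^\top)]e = 0$), and one concentrates the negative off-diagonal mass of $\Delta$ on index pairs with $x_i x_j = 0$, where the componentwise constraint reduces to $\Delta_{ij} \geq 0$ being unnecessary. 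With this tailored $\Delta$, all R3($\rho+1$)-constraints hold, certifying $(x, xx^\top) \in \cF^{R3}_{\rho+1}$.
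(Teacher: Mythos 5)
Your overall strategy---keeping $R' = x(u')^\top$ as forced by the rank-one Schur complement, moving $u$ affinely toward $e$, and assembling $U'$ from $u'(u')^\top$ plus positive semidefinite corrections with zero row sums---is the same as the paper's, but the construction is not completed, and the step you yourself flag as ``the main obstacle'' is exactly where the argument breaks down. Two concrete problems. First, the fallback of concentrating the negative off-diagonal mass of $\Delta$ on pairs with $x_ix_j=0$ is not available in general: a rank-one point of $\cF^{R3}_\rho$ may have $x$ of full support (e.g.\ $x=\frac1n e$ for $\rho\geq 3$, by Lemma~\ref{no-upper-bound}), in which case no such pairs exist; and even when they do exist, the componentwise constraint there reads $U'_{ij}\geq x_iu'_j+x_ju'_i$, which is \emph{not} vacuous when exactly one of $x_i,x_j$ is zero. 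Second, the existence of a symmetric $\Delta$ that simultaneously has diagonal $\delta$, zero row sums, obeys the entrywise lower bounds $\Delta_{ij}\geq -\mathrm{slack}_{ij}$, and keeps $\Delta+(1-\tau)^2(U-uu^\top)\succeq 0$ is precisely what needs to be proved; the polygon inequality only yields a PSD $\Delta$ with the right diagonal and row sums, with no control on individual off-diagonal entries, and an aggregate slack count per row does not produce the required matrix. As written, this is a genuine gap.

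The paper closes exactly this gap by making the correction explicit rather than abstract. It first reduces (via Theorem~\ref{general-construct}) to the case $\nu=\|x\|_0>2\rho+1$, which guarantees $\mu:=\|u\|_0-\rho>3$; it then takes $s_i=(1-u_i)/\mu$ on the support of $u$ and $s_i=0$ elsewhere (so $e^\top s=1$), sets $u'=u+s$, $R'=x(u')^\top$, and
\[
U' \;=\; u'(u')^\top \;+\; \frac{\mu-2}{\mu}\left(U-uu^\top\right) \;+\; \textrm{Diag}(s)-ss^\top .
\]
The shrinkage factor $\frac{\mu-2}{\mu}$, absent from your $U'_0$, is what makes $\textrm{diag}(U')=u'$ hold exactly, so no auxiliary $\Delta$ is needed; the only negative off-diagonal contribution is the explicit $-s_is_j$, and the inequality $X_{ij}-R'_{ij}-R'_{ji}+U'_{ij}\geq 0$ is then checked entrywise by a short computation using $U_{ij}-x_iu_j-x_ju_i+x_ix_j\geq 0$ and $0\leq x\leq u$, $s\geq 0$. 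To salvage your version you would need to replace the abstract $\Delta$ by a similarly explicit matrix and carry out the entrywise verification.
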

\begin{proof}
Let $\rho \in \{1,\ldots,n-1\}$ and let $(x, xx\T) \in \cF^{R3}_\rho$. Let us define $\nu = \| x \|_0$. If $\rho\in \left\{ 2,\ldots,\left \lfloor \textstyle{\frac{n+1}{2}}\right \rfloor - 1\right\}$ and $\nu \leq 2 (\rho + 1) - 1 = 2 \rho + 1$; or if $\rho \in \left\{  \left \lfloor \textstyle{\frac{n+1}{2}} \right \rfloor, \ldots,n\right\}$, then the assertion follows from Theorem~\ref{general-construct}. Therefore, let us assume that $\rho\in \left\{ 2,\ldots,\left \lfloor \textstyle{\frac{n+1}{2}}\right \rfloor - 1\right\}$ and $\nu > 2 \rho + 1$. For each $\rho \geq 3$, we remark that the set of rank-one solutions with this property is nonempty by Lemma~\ref{no-upper-bound}.

Since $(x, xx\T) \in \cF^{R3}_\rho$, there exists $(u,U,R) \in \R^n \times \cS^n \times \R^{n \times n}$ such that $( x,  u,  x  x\T ,  U,  R) \in \R^n \times \R^n \times \cS^n \times \cS^n \times \R^{n\times n}$ is (R3($\rho$))-feasible. Since $X = x x\T$, we have $R = x u\T$ and $U - u u\T \succeq 0$ by Schur complementation. We will construct $(u^\prime, U^\prime, R^\prime) \in \R^n \times \cS^n \times \R^{n \times n}$ such that $( x,  u^\prime,  x  x\T ,  U^\prime,  R^\prime) \in \R^n \times \R^n \times \cS^n \times \cS^n \times \R^{n\times n}$ is (R3($\rho + 1$))-feasible.

Let $u^\prime = u + s$, where $s \in \R^n$ is given by
    \[
    s_i = \begin{cases}
        \frac{1 - u_i}{\Vert u \Vert_0 - \rho}, & \text{if $u_i > 0$},\\
        0, & \text{otherwise.}
    \end{cases}
    \]
Since $0 \leq u \leq e$, we have $s \in \R^n_+$ since $\Vert u \Vert_0 \geq \nu > 2 \rho + 1$, which implies that 
\[
\mu := \Vert u \Vert_0 - \rho > \rho + 1 \geq 3. 
\]
Therefore, we obtain $0 \leq x \leq u \leq u^\prime \leq e$. Furthermore, $e\T s = 1$, which implies that  $e\T u^\prime = \rho + 1$. Since $X = x x\T$, we define $R^\prime = x (u^\prime)^T = R + x s\T$. Finally, we define
\[ 
U^\prime = u^\prime (u^\prime)^T + \frac{\mu - 2}{\mu} \left( U - uu^T \right) + \textrm{Diag}(s) - ss^T.
\]
By Schur complementation, 
\[
\begin{bmatrix} X & R^\prime \\ (R^\prime)\T & U^\prime \end{bmatrix} - \begin{bmatrix} x \\ u^\prime \end{bmatrix} \begin{bmatrix} x \\ u^\prime \end{bmatrix}\T = \begin{bmatrix} 0 & 0 \\ 0 & \frac{\mu - 2}{\mu} \left( U - uu^T \right) + \textrm{Diag}(s) - ss^T \end{bmatrix} \succeq 0,
\]
where we used Lemma~\ref{psd-condition}, $\mu > 3$, and $U - u u\T \succeq 0$. Therefore, 
the semidefiniteness constraint is satisfied. We clearly have $X e = x$, $R^\prime e = (\rho + 1) x$, $(R^\prime)\T e = u^\prime$, and $U^\prime e = (\rho + 1) u^\prime$. We next focus on the constraint $\textrm{diag}(U^\prime) = u^\prime$. If $u_i = 0$, then $u^\prime_i = u_i = U_{ii} = U^\prime_{ii} = 0$ since $s_i = 0$. If $u_i > 0$, then
\begin{eqnarray*}
U^\prime_{ii} & = & (u^\prime_i)^2 + \frac{\mu - 2}{\mu} \left( U_{ii} - u_i^2 \right) + s_i - s_i^2 \\
 & = & \left( u_i + s_i \right)^2 + \frac{\mu - 2}{\mu} \left( u_i - u_i^2 \right) + s_i - s_i^2 \\
 & = & \frac{2}{\mu} u_i^2  + \frac{\mu - 2}{\mu} u_i + s_i + 2 u_i s_i \\
 & = & \frac{1}{\mu} \left(2 u_i^2  + (\mu - 2) u_i + 1 - u_i + 2 u_i (1 - u_i) \right) \\
& = & \frac{(\mu - 1) u_i + 1}{\mu} \\
& = & u^\prime_i,
\end{eqnarray*}
where we used $\textrm{diag}(U) = u$ in the second line and the definition of $s$ in the fourth line. This establishes $\textrm{diag}(U^\prime) = u^\prime$.

Furthermore, we have $X \geq 0$, $R^\prime =R + x s\T \geq 0$, and $X - (R^\prime)\T = X - R\T - s x\T \leq 0$ since $X - R\T \leq 0$, $x \geq 0$, and $s \geq 0$. We next verify $X - (R^\prime)\T - R^\prime + U^\prime \geq 0$. Recall again that the remaining inequality constraints are implied. For the diagonal components, we have
\[
  U^\prime_{ii} - 2   R^\prime_{ii} +   X_{ii} =    u^\prime_i - 2   x_i   u^\prime_i +   x_i^2 \geq (u^\prime_i)^2 - 2   x_i   u^\prime_i +   x_i^2 = (  u_i -   x_i)^2 \geq 0, \quad i = 1,\ldots,n,
\]
where we used $\textrm{diag}(U^\prime) = u^\prime$ and $0 \leq x \leq u \leq e$. If $1 \leq i < j \leq n$, then
\begin{eqnarray*}
U^\prime_{ij} -   R^\prime_{ij} -   R^\prime_{ji} +   X_{ij} & = & u^\prime_i u^\prime_j + \frac{\mu - 2}{\mu} \left( U_{ij} - u_i u_j \right) - s_i s_j - x_i u^\prime_j - x_j u^\prime_i + x_i x_j \\
 & = & (u_i + s_i) (u_j + s_j) + \frac{\mu - 2}{\mu} \left( U_{ij} - u_i u_j \right) - s_i s_j \\
 & & \quad - x_i (u_j + s_j) - x_j (u_i + s_i) + x_i x_j \\
 & = & u_i s_j + s_i u_j - x_i u_j - x_i s_j - x_j u_i - x_j s_i + x_i x_j + \frac{2}{\mu} u_i u_j + \frac{\mu - 2}{\mu} U_{ij} \\
 & = & \frac{\mu - 2}{\mu} \left( U_{ij} - x_i u_j - x_j u_i + x_i x_j \right) + \frac{2}{\mu} \left( u_i u_j - x_i u_j - x_j u_i + x_i x_j \right) \\
 & & \quad + u_i s_j + s_i u_j - x_i s_j - x_j s_i \\
 & \geq & \frac{2}{\mu} \left( (u_i - x_i) (u_j - x_j) \right) + \left( s_j (u_i - x_i) + s_i (u_j - x_j) \right) \\
 & \geq & 0,
\end{eqnarray*}
where we used $\mu > 3$ and $  U_{ij} -   R_{ij} -   R_{ji} +   X_{ij} = U_{ij} - x_i u_j - x_j u_i + x_i x_j \geq 0$ to derive the first inequality, and $0 \leq x \leq u$ together with $s \geq 0$ to arrive at the final one. This completes the proof. 
\end{proof}

Theorem~\ref{nested-rank-one} establishes the nested behavior of the set of rank-one solutions of $\cF^{R3}_\rho$ with respect to $\rho$. We close this section with the following result about the tightness of the lower bound $\ell^{R3}_\rho(Q)$ arising from \eqref{R3}.

\begin{corollary} \label{quality_lb}
We have 
\begin{eqnarray} \label{quality_lb_rels}
\ell^{R3}_\rho(Q) & \leq & \ell_{2 \rho - 1}(Q)\, , \quad \textrm{if } \rho\in \left\{ 2, \ldots, \left \lfloor \textstyle{\frac{n+1}{2}} \right \rfloor\right\} \, , \textrm{ while} \label{quality_lb_rel1} \\
\ell^{R3}_\rho(Q) & \leq & \ell(Q)\, , \quad \quad \textrm{if }\rho \in \left\{  \left \lfloor \textstyle{\frac{n+1}{2}} \right \rfloor + 1, \ldots,n\right\}  \, .\label{quality_lb_rel2}   
\end{eqnarray}
\end{corollary}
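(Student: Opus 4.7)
The plan is to derive both inequalities directly from Theorem~\ref{general-construct} by exhibiting explicit rank-one feasible solutions of \eqref{R3rho} whose objective values are $x\T Q x$ for appropriately sparse $x \in F$. The essential observation is that for any $x \in F$, the lifted pair $(x, x x\T)$ automatically lies in $\cF^{R3}$: the constraints $e\T x = 1$, $x \geq 0$, $X = x x\T \geq 0$, $X e = x (e\T x) = x$, and $X \succeq x x\T$ are all trivially satisfied. Hence inclusions \eqref{incl1} and \eqref{incl2} immediately translate into statements about \emph{all} rank-one points $(x, x x\T)$ with the prescribed sparsity on $x$.

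For the first bound, fix $\rho \in \left\{2, \ldots, \lfloor (n+1)/2 \rfloor\right\}$ and let $x \in F_{2\rho - 1}$ be arbitrary. By the preceding observation, $(x, x x\T) \in \cF^{R3}$, so by \eqref{incl1} we have $(x, x x\T) \in \cF^{R3}_\rho$. Using the reformulation \eqref{ell_rho_R3_alt},
\begin{equation*}
\ell^{R3}_\rho(Q) \;\leq\; \langle Q, x x\T \rangle \;=\; x\T Q x\, .
\end{equation*}
Since this holds for every $x \in F_{2\rho-1}$, taking the infimum over $F_{2\rho-1}$ yields $\ell^{R3}_\rho(Q) \leq \ell_{2\rho-1}(Q)$, which is \eqref{quality_lb_rel1}.

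For the second bound, the same argument applies with \eqref{incl2} in place of \eqref{incl1}. Fix $\rho \in \left\{\lfloor (n+1)/2 \rfloor + 1, \ldots, n\right\}$ and let $x \in F$ be arbitrary. Then $(x, x x\T) \in \cF^{R3}$, so by \eqref{incl2} we have $(x, x x\T) \in \cF^{R3}_\rho$, whence $\ell^{R3}_\rho(Q) \leq x\T Q x$. Minimizing over $F$ gives $\ell^{R3}_\rho(Q) \leq \ell(Q)$, which is \eqref{quality_lb_rel2}.

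There is no real obstacle to this proof; the work has already been done in Theorem~\ref{general-construct}, and the corollary is simply the statement, at the level of objective values, that retaining a large family of rank-one lifted points in $\cF^{R3}_\rho$ forces the lower bound from \eqref{R3rho} to be no better than the sparse StQP value $\ell_{2\rho-1}(Q)$ (respectively, the unconstrained StQP value $\ell(Q)$). The only mild point to keep in mind is the easy verification that $(x, x x\T) \in \cF^{R3}$ for any $x \in F$, so that the hypothesis of Theorem~\ref{general-construct} is automatically available.
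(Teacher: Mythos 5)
Your proof is correct and follows exactly the paper's route: the paper also deduces the corollary directly from \eqref{ell_rho_R3_alt} together with the inclusions \eqref{incl1} and \eqref{incl2} of Theorem~\ref{general-construct}. Your explicit check that $(x, x x\T) \in \cF^{R3}$ for every $x \in F$ is a small but welcome addition that the paper leaves implicit.
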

\begin{proof}
The relations follow from \eqref{ell_rho_R3_alt} and from \eqref{incl1} and \eqref{incl2}, respectively.    
\end{proof}

Corollary~\ref{quality_lb} reveals that the lower bound $\ell^{R3}_\rho(Q)$ can be potentially quite weak especially for larger values of $\rho$.

\section{Concluding Remarks} \label{conc}

A Standard Quadratic optimization Problem with hard sparsity constraints can be exactly reformulated as a mixed-binary QP. Therefore, it is tempting to use tractable LP- or SDP-based relaxations, either in a straightforward/vanilla way or by suitable combinations as we did in Section~\ref{SDP-RLT-Relaxation}. The aim is to achieve tight rigorous bounds with a computational effort that scales well with the problem size. 

However, our analysis reveals that some caveats are in place when following this approach. In unfavorable circumstances (e.g., if the sparsity constraints are not stringent enough), the resulting bounds are quite weak.
We characterized the exactness of the bounds and studied the behavior of rank-one solutions to the relaxations. 

The findings of this article definitely call for more investigation, either in the direction of refined RLT models, or equally importantly, tighter conic-based relaxations which still offer some tractability. While these avenues are beyond the scope of the present work, they remain on our research agenda for the near future.

\bibliographystyle{abbrv}
\bibliography{references}

\end{document}